\documentclass[11pt]{article} 

\usepackage[utf8]{inputenc} 
\usepackage{amsmath,amssymb,latexsym,amsthm}


\usepackage{geometry} 
\geometry{a4paper} 

\usepackage{graphicx} 
\usepackage{color,soul}
\usepackage{epstopdf}
\epstopdfsetup{update}


\usepackage{booktabs} 
\usepackage{array} 
\usepackage{paralist} 
\usepackage{verbatim} 
\usepackage{subfigure} 

\usepackage{fancyhdr} 
\pagestyle{fancy} 
\lhead{}\chead{}\rhead{}
\lfoot{}\cfoot{\thepage}\rfoot{}

\usepackage{sectsty}
\allsectionsfont{\sffamily\mdseries\upshape} 

\usepackage[nottoc,notlof,notlot]{tocbibind} 
\usepackage[titles,subfigure]{tocloft} 


\newtheorem{theorem}{Theorem}[section] 
\newtheorem{lemma}[theorem]{Lemma} 

\theoremstyle{definition} 
\newtheorem{definition}[theorem]{Definition}

\newtheorem{remark}[theorem]{Remark}

\newtheorem{example}{Example}

\newcommand{\R}{\mathbb{R}}
\newcommand{\Rn}{\R^{n}}

\newcommand{\N}{\mathbb{N}}

\newcommand{\abs}[1]{\left\vert#1\right\vert}
\newcommand{\set}[1]{\left\{#1\right\}}

\newcommand{\paren}[1]{\left ( #1\right )}

\newcommand{\wt}[1]{\widetilde{#1}}

\newcommand{\D}{\partial}

\def\XXint#1#2#3{{\setbox0=\hbox{$#1{#2#3}{\int}$ }
\vcenter{\hbox{$#2#3$ }}\kern-.6\wd0}}

\newcommand{\bs}{\backslash}
\newcommand{\p}{\partial}

\newcommand{\diam}[1]{\textrm{diam}\paren{#1}}
\newcommand{\supp}[1]{\textrm{supp}\paren{#1}}

\newcommand{\WF}[1]{\textrm{WF}\paren{#1}}

\newcommand{\WFp}[1]{\textrm{WF}^+\paren{#1}}
\newcommand{\WFm}[1]{\textrm{WF}^-\paren{#1}}
\newcommand{\WFpm}[1]{\textrm{WF}^\pm\paren{#1}}

\newcommand{\WFtm}[1]{\wt{\textrm{WF}^-}\paren{#1}}

\def\inter{\text{int}}

\renewcommand{\S}{\mathcal{S}}

\newcommand{\vareps}{\varepsilon}

\newcommand{\bna}{\begin{eqnarray*}}
\newcommand{\ena}{\end{eqnarray*}}
\newcommand{\bnan}{\begin{eqnarray}}
\newcommand{\enan}{\end{eqnarray}}

\setlength{\marginparwidth}{2cm}

\newcommand{\E}{\mathcal E}



\title{Uniqueness for a Seismic Inverse Source Problem Modeling a Subsonic Rupture}
\author{Maarten V. de Hoop \thanks{Simons Chair in Computational and Applied Mathematics and Earth Science, Rice University, 6100 Main Street, Houston TX 77005, USA. \texttt{mdehoop@rice.edu}}, Lauri Oksanen \thanks{Department of Mathematics, University College London, Gower
Street, London UK, WC1E 6BT. \texttt{l.oksanen@ucl.ac.uk}} and Justin Tittelfitz \thanks{Department of Mathematics, Purdue University, West Lafayette IN 47907, USA. Currently at Amazon Web Services. \texttt{tttlf@amazon.com}}}

\date{} 

\begin{document}

\maketitle

\begin{abstract}
We consider an inverse problem for an inhomogeneous wave equation with discrete-in-time sources, modeling a seismic rupture. We assume that the sources occur along a path with subsonic velocity, and that data are collected over time on some detection surface. We explore the question of uniqueness for these problems, show how to recover the times and locations of sources microlocally, and then reconstruct the smooth part of the source assuming that it is the same at each source location.
\end{abstract}

\section{Introduction}

Let $c \in C^\infty(\R^n)$ be strictly positive    
and consider the wave equation 
\begin{align}
\label{PDEu}
	\begin{cases}
	\partial^2_{t} u - c(x)^2 \Delta u = F(t,x) & \textrm{ in } \R \times \Rn, \\
          u(0,\cdot) = \D_t u(0,\cdot) = 0 & \textrm{ in } \Rn.
	\end{cases}
\end{align}
We will study the inverse source problem to determine $F$
given the data
\begin{align*}
	\Lambda F := u\vert_{(0,T)\times\partial\Omega},
\end{align*}
where $\Omega \subset \Rn$ is an open and bounded set with smooth boundary.
It is well-known that such a problem does not have a unique solution in general. For example, if 
we set $F = \p ^2_{t} v - c^2 \Delta v$
where $v \in C_0^\infty(\Omega \times (0,T))$,
then $\Lambda F = 0$.

To overcome non-uniqueness we will assume that the source is of the form
\begin{align}
\label{def_F}
	F(t,x) := \sum_{j = 1}^J \delta(t - t_j) f_j(x), 
\end{align}
where $J \in \N$ and $0 < t_1 < t_2 < \ldots < t_J$.
Furthermore, 
we assume that $f_j$
is in the space of compactly supported distributions $\E'(\Omega)$, 
and has the form  
\begin{align}
\label{def_fj}
\langle f, \phi \rangle_{\E'\times C^\infty(\Omega)}
= \int_{S_j} h_j(x) \phi(x) dx, \quad \phi \in C^\infty(\Omega),
\end{align}
where $S_j = \supp{f_j}$ is a smooth oriented manifold with boundary and $h_j \in C^\infty(S_j)$.
We assume that either $\dim (S_j) = n$ or $\dim(S_j) = n-1$
and impose further conditions on the functions $f_j$ below.

The conditions are motivated by models of seismic ruptures,
where it is reasonable to assume that each $f_j$ has the same or similar spatial characteristics. 
It is also reasonable to model a rupture using discrete-in-time sources, as the sources only radiate where the velocity of the rupture changes, which only happens at discrete times \cite{Mad}.

We mention the widely applied procedure for estimating the source by Kikuchi \& Kanamori \cite{KK0}, which is based on maximizing the time correlations between observed and modeled wave solutions. Here, the ruptures are essentially represented by a sum of point sources parametrized by their locations and onset times. 
The sum of point sources models a sequence of subevents in the rupture.  
A refined, iterative procedure introduces in every iteration a new subevent \cite{LG}.
In our approach, 
we begin also by identifying the locations and onset times of subevents, however, in our case the subevents have spatial structure modeled by $f_j$.

\subsection{Previous literature}

Our proof uses the the unique continuation principle 
by Tataru \cite{T1},
see \cite{ROB} and  \cite{HORunique} for earlier results, and \cite{BL} and \cite{EINT} for extensions to other time-dependent systems like elasticity. In addition, we will draw upon ideas from the theory of inverse initial source problems, in particular, from \cite{SU} where a time-reversal approach for an inverse initial source problem with a non-constant wave speed was introduced. 
The motivation for \cite{SU} was the medical imaging modality known as thermoacoustic tomography but similar ideas have been used in many other applications, including geophysical ones; for time-reversal methods used in rupture detection, see \cite{ISHV}, \cite{WIS}, \cite{MLM}, \cite{RS}, \cite{EDMN}, \cite{KS}, \cite{KS2}, and in microseismicity see \cite{FJ}, \cite{DSFS}, \cite{APW}. 

The inverse initial source problem has been studied widely in the context of thermoacoustic and photoacoustic tomography: see \cite{SU}, \cite{XKA}, \cite{XWKA} for the problem with partial data, 
see \cite{SU2} for a speed with discontinuities, see \cite{QSUZ} for numerical discussion, see \cite{TIT}, \cite{TIT2} and \cite{Homan} for the problem in elastic and attenuating media respectively, and finally, one may find the surveys \cite{HKN}, \cite{KK1}, \cite{KK2} of interest. There has also been recent work on the problem of jointly recovering the speed and source \cite{SU3}, and the problem of recovery with an approximate speed \cite{OU}.

Let us now turn to inverse source problems where the source is on the right-hand side of the wave equation. We mention the result by two of the authors \cite{dHT},
where a source of the form (\ref{def_F}) is considered, but it is required that the sources are well-separated from one another in space and time, in contrast to the sub-sonic proximity required in the current work. These assumptions are appropriate for modelling microseismicity (instead of ruptures, as in this paper).
Most other results for inverse source problems consider a right-hand side of the form $a(t)f(x)$ or $a(t,x)f(x)$ where $a$ is a known function, see \cite{Y} and \cite{Puel} respectively,
and \cite{SU5} for a recent result. Similar problems have been stated and explored for the elastic wave equation, see \cite{GIY} and \cite{GY}. 

\section{Statement of the results}

We will reconstruct $F$ in two steps. First we use a microlocal argument to recover the source times $t_j$ and supports
$S_j = \supp{f_j}$, $j=1, 2, \dots, J.$
Then we impose an assumption that the distributions $f_j$
are translations of a single distribution $f$ and recover this. 

Before stating our results we need to introduce some notation.
We begin by recalling the definition of the wave front set, see e.g. 
\cite{Hor1} for further details.

\begin{definition}
Let $X \subset \R^n$ be open. 
The {\em wavefront set}  $\WF{w}$ of a distribution $w \in \mathcal{D}'(X)$ is a subset of the cotangent bundle $T^*X$ indicating the locations and the directions of the singularities of $w$. If $(x_0,\xi_0) \in T^* X \setminus 0$, then $(x_0,\xi_0)$ is not in the wavefront set of $w$ if there exists $\psi \in C_0^\infty(X)$ with $\psi(x_0) \neq 0$, and a conic neighborhood $V$ of $\xi_0$ such that
\begin{align*}
	\abs{\widehat{\psi w}(\xi)} \leq C_N (1 + \abs{\xi})^{-N},
\quad \xi \in V,\ N \in \N.
\end{align*}
Here $\widehat{\psi w}$ indicates the Fourier transform of $\psi w$.	
\end{definition}

If $w$ satisfies the wave equation
$$
\partial^2_{t} w - c(x)^2 \Delta w = 0, \quad \textrm{ in } \R \times \Rn, 
$$
then $\WF{w}$ is invariant under the bicharacteristic flow corresponding to the wave operator, see e.g. \cite{Hor3}. 
The principal symbol $p \in C^\infty(T^*\R^{1+n})$ of the wave operator is $p(t,x,\tau,\xi) = -\tau^2 + c^2(x)|\xi|^2$,
and the forward bicharacteristic flow $\Phi$ 
acts on the level set $p^{-1}(0) \subset T^*\R^{1+n}$ as follows
\begin{align*}
\Phi : \R \times p^{-1}(0) \to p^{-1}(0),
\quad
\Phi(s; t, x, \tau, \xi) = (t + s\tau, \gamma(s), \tau, \dot \gamma(s)),
\end{align*}
where $\gamma(s) = \gamma(s; x, \xi)$
is the geodesic on $(\Rn, c^{-2} dx^2)$ satisfying the initial conditions 
$\gamma(0) = x$ and $\dot \gamma(0) = \xi$. Here $\dot \gamma$ is the direction of $\gamma$ as a cotangent vector, that is, 
in coordinates $\dot \gamma = c^{-2} \sum_{j=1}^n (\p_s \gamma^j) dx^j$.

Let us now consider the solution $u$  of \eqref{PDEu}
where $F$ is of the form \eqref{def_F}.
For a set $A$, we denote by $\chi_A$ the indicator function of $A$,
that is, $\chi_A(x) = 1$ if $x \in A$ and $\chi_A(x) = 0$ otherwise. 
By Duhamel's principle, it holds that $u = \sum_{j=1}^J u_j$ where $u_j = \chi_{\set{t \geq t_j}}w_j$ and $w_j$ is the solution of 
\begin{align}
\label{PDEw}
	\begin{cases}
	\partial^2_{t} w - c^2 \Delta w = 0,  & \textrm{ in } \R \times \Omega, \\
	w(t_j,x) = 0; \quad \D_t w(t_j,x) = f_j(x)  & \textrm{ in } \Omega.
	\end{cases}
\end{align}
Note that $\WF{u_j}$ is not invariant under the bicharacteristic flow
$\Phi$ but $\WF{w_j}$ is. 

We will next formulate three assumptions in terms of microlocal properties of the distributions $f_j$ and $w_j$.
Firstly, we will assume that the wave front sets of $w_j$ are disjoint:
\begin{align}\tag{ML1}\label{ML1}
	\WF{w_j} \cap \WF{w_k} = \emptyset, \quad j \ne k.
\end{align}
Secondly, we define the $n-1$ dimensional manifold without boundary
\begin{align}
\label{def_Sigma}
\Sigma_j = \begin{cases}
\p S_j, & \dim(S_j) = n,
\\
S_j^\inter, & \dim(S_j) = n-1,
\end{cases}
\quad j = 1,\dots,J,
\end{align}
and assume that 
\begin{align}\tag{CN}\label{CN}
	\WF{f_j} = \overline{N^* \Sigma_j}, \quad j = 1, 2, \dots, J.
\end{align}
Here $N^* \Sigma_j$ 
is the conormal bundle of $\Sigma_j$.
In the case $\dim(S_j) = n-1$, we let $\nu$ to be one of the two unit conormal vector fields of $\Sigma_j$,
and in the case $\dim(S_j) = n$, we let $\nu$ to be the outward unit outward conormal vector field of $\Sigma_j$.
Then $\overline{N^* \Sigma_j}$ is the union of the following two sets  
$$
N_j^\pm = \{(x, a \nu) \in T^* \R^n;\ x \in \overline{\Sigma_j},\ \pm a > 0 \}.
$$ 
Note that if $\dim(S_j) = n-1$ then (\ref{CN}) amounts to assuming that the extension of $h_j$ in (\ref{def_fj}) by zero accross $\p S_j$ is smooth.
This follows from \cite[Th. 8.1.5]{Hor1}
together with a change of coordinates.
In the case $\dim(S_j) = n$, 
(\ref{CN}) means that the extension of $h_j$
as above is not smooth at any $x \in \p S_j$.

As $\WF{w_j}$ is invariant under the bicharacteristic flow,
it holds that $\WF{w_j(t, \cdot)}$ is the union of the two sets
$$
\WFpm{w_j(t, \cdot)} = 
\{(\gamma(s), \dot \gamma(s));\ s = t-t_j,\ \gamma = \gamma(\cdot; x, \xi),\ (x,\xi) \in N_j^\pm\}.
$$
We call $\WFp{w_j(t, \cdot)}$ and $\WFm{w_j(t, \cdot)}$ the outward and inward wavefronts, respectively. 
Note that the outward and inward wavefronts pair at the source time $t_j$ 
in the following sense
\begin{align}\label{pair}
	\WFp{w_j(t_j,\cdot)} = N_j^+ = \widetilde {N_j^-} = \WFtm{w_j(t_j,\cdot)},
\end{align}
where tilde indicates reflection in the dual variable, that is,
\begin{align}
\label{def_dual_reflection}
	\wt A = \set{(x,-\xi) : (x,\xi) \in A}, \quad A \subset T^*\Omega.
\end{align}
Now we state our third microlocal assumption, that \eqref{pair} is the only kind of pairing. That is, we assume that the manifolds $\Sigma_j$ are connected and 
\begin{align}\label{ML2}\tag{ML2}
\text{if ${\textrm{WF}^\sigma\paren{w_j(t,\cdot)}} = \wt{\textrm{WF}^{\sigma'}}\paren{w_k(t,\cdot)}$ then $j=k$, $t=t_j$ and $\sigma \ne \sigma'$.}
\end{align}

We denote by $S^* \Omega$ the unit cosphere bundle 
$$
S^* \Omega = \{(x,\xi) \in T^* \Omega;\ c^2(x)|\xi|^2 = 1\}.
$$
We will prove the following theorem in Section \ref{identification}.

\begin{theorem}\label{th_microlocal} 
Suppose that $\Omega$ is non-trapping and strictly convex in the sense that 
for all $(x,\xi) \in S^*\Omega$ 
the geodesic $\gamma = \gamma(\cdot; x, \xi)$ 
satisfies the following: 
there is unique $s \in (0,T-t_J)$ such that $\gamma(s) \in \p \Omega$,
and, furthermore, 
$\dot \gamma(s) \notin T_{\gamma(s)}^* \p \Omega$
and $\gamma(t) \in \Rn \setminus \overline \Omega$ for all $t > s$.
Suppose that the manifolds $\Sigma_j$ are smooth and connected, and that \eqref{CN}, \eqref{ML1} and \eqref{ML2} are satisfied. 
Then the times $t_j$ and supports 
$S_j$, $j=1,2,\dots, J$, can be recovered from the boundary data $\Lambda F$.
\end{theorem}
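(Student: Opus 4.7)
The plan is to read off the source events from the wavefront set $\WF{\Lambda F}$ on $(0,T)\times \p\Omega$ by microlocal propagation and pairing, and then reconstruct each $S_j$ from its associated $\Sigma_j$.

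\textbf{Step 1 (boundary wavefront set).}
Write $u = \sum_{j=1}^J u_j$ with $u_j = \chi_{\set{t\geq t_j}}w_j$. For $t > t_j$ we have $u_j = w_j$, whose spacetime wavefront set is invariant under the bicharacteristic flow $\Phi$ and is the forward $\Phi$-lift of the initial wavefront $N_j^+ \cup N_j^-$ sitting on $\set{t=t_j}$. By the non-trapping/strictly convex hypothesis, every bicharacteristic issued from $(t_j, x, \tau, \xi)$ with $x\in\overline{\Sigma_j}$ exits $\Omega$ through $\p\Omega$ transversally at a unique time in $(t_j, T)$. The standard transversal-restriction theorem for wavefront sets then lets us read off from $\WF{\Lambda F}$ every boundary covector of $\WF{u}$ and extend each uniquely to an interior covector satisfying $\tau^2 = c(y)^2|\xi|^2$. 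Assumption \eqref{ML1} ensures no cancellation between the different $j$, so no information is lost in the sum.

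\textbf{Step 2 (backward bicharacteristic tracing).}
For each boundary covector $(t, y, \tau, \xi)\in\WF{u}$ obtained in Step~1, follow the backward bicharacteristic $\Phi(-s; t,y,\tau,\xi)$ into $\Omega$. The curve lies in $\WF{u}$ as long as it coincides with a point of $\WF{u_j}$ for some $j$; since $w_j$ is smooth for $t < t_j$ by Duhamel's principle and $u_j = 0$ there, the curve leaves $\WF{u}$ precisely at time $t_j$, landing on a point $(t_j, p, \tau, \zeta)$ with $p\in\overline{\Sigma_j}$ and $(p,\zeta)\in N_j^+\cup N_j^-$. We thus associate to each boundary covector a unique terminal spacetime point $(t^*, p, \zeta)$ and a sign $\sigma\in\set{+,-}$; however, the index $j$ is not yet labeled.

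\textbf{Step 3 (pairing and extraction of source events).}
By \eqref{pair}, at the source event $(t_j, p)$ with $p\in\overline{\Sigma_j}$ we have $N_j^+ = \wt{N_j^-}$, so two backward bicharacteristics (one carrying sign $+$, the other $-$) terminate at the same spacetime point with opposite spatial codirections. Conversely, if backward bicharacteristics of two distinct boundary covectors meet at $(t^*, x^*)\in\R\times\Omega$ with opposite spatial directions, so that the terminal covector of one lies in $\wt{(\cdot)}$ of the terminal covector of the other, then interpreting \eqref{ML2} pointwise forces $t^* = t_j$ and $x^*\in\overline{\Sigma_j}$ for a single $j$, with opposite signs $\sigma\ne\sigma'$. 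Hence the set of such pairing points in $\R\times\Omega$ is exactly $\bigcup_{j=1}^J \set{t_j}\times \overline{\Sigma_j}$. Using that each $\Sigma_j$ is connected, the connected components of this set are in bijection with $j=1,\dots,J$; from each component we read off $t_j$ (the common first coordinate) and $\overline{\Sigma_j}$ (the spatial projection), and in particular $J$ itself.

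\textbf{Step 4 (reconstruction of $S_j$ and main obstacle).}
Given $\overline{\Sigma_j}$, to recover $S_j$ we distinguish two cases: if $\dim S_j = n-1$ then $S_j = \overline{\Sigma_j}$, while if $\dim S_j = n$ then $S_j$ is the bounded region enclosed by $\Sigma_j$. The two cases are separated by the microlocal order/symbol of the singularity carried along the bicharacteristics, which is recoverable from $\Lambda F$ via the principal symbol computation for $w_j$. The main technical obstacle is Step~2: one must verify that the indicator factor $\chi_{\set{t \geq t_j}}$ introduces no extraneous singularities that contaminate the boundary data. This is handled by noting $w_j(t_j,\cdot) = 0$, so $u_j$ is continuous across $t=t_j$ and the only jump lives in $\p_t u_j$; the wavefront of this jump is conormal to $\set{t=t_j}$ and therefore does not propagate along bicharacteristics of the wave operator into $(t_j,T)\times\p\Omega$, so it is invisible in $\WF{\Lambda F}$ and does not interfere with the tracing in Steps 2--3.
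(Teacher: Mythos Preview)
Your overall strategy is in the right spirit, but there are two genuine gaps that the paper's argument is designed to avoid.

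\medskip
\textbf{Gap in Step 2.} You propose to trace a backward bicharacteristic from a boundary covector and detect the source time $t_j$ as the moment the curve ``leaves $\WF{u}$''. But $\WF{u}$ in the interior of $(0,T)\times\Omega$ is not part of the data: you only measure $\Lambda F = u|_{(0,T)\times\p\Omega}$, hence only $\WF{\Lambda F}\subset T^*((0,T)\times\p\Omega)$. A single boundary covector determines a full bicharacteristic line in spacetime, and nothing in the boundary data tells you where on that line the source event sits. So Step~2 is not executable from $\Lambda F$ alone, and consequently the terminal point $(t^*,p,\zeta)$ you extract there is not actually available.

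\medskip
\textbf{Gap in Step 3.} You invoke \eqref{ML2} ``pointwise'': whenever two backward bicharacteristics meet at a spacetime point with opposite spatial codirections, you conclude this must be a source event. But \eqref{ML2} is a hypothesis about equality of the \emph{entire} sets $\textrm{WF}^\sigma\!\paren{w_j(t,\cdot)}$ and $\wt{\textrm{WF}^{\sigma'}}\!\paren{w_k(t,\cdot)}$, not about nonempty intersection. Two propagated conormal fronts from distinct sources can be tangent at isolated points (opposite covectors at a common base point) without the full fronts coinciding, so \eqref{ML2} does not exclude such accidental pointwise pairings. The paper makes this distinction explicit: in Remark~\ref{rem_partial_data} a strictly stronger intersection-type condition \eqref{partial_pairing} is imposed precisely because set equality is what \eqref{ML2} controls.

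\medskip
\textbf{How the paper closes both gaps.} Rather than tracing individual bicharacteristics, the paper first identifies the connected components $\Gamma$ of $\WF{\Lambda F}$, which \emph{are} observable. Using Lemma~\ref{lem_Psi} (injectivity of $\Psi_t$) together with \eqref{ML1} and connectedness of $\Sigma_j$, each $\Gamma$ is the image of an entire $N_j^\pm$. One then backpropagates whole components via $\Psi_t^{-1}$ and checks, as $t$ varies, for equality $\Psi_t^{-1}(\Gamma_1)=\wt{\Psi_t^{-1}(\Gamma_2)}$. This matches the hypothesis \eqref{ML2} exactly and yields $t_j$ and $N_j^\pm$ without ever appealing to interior knowledge of $\WF{u}$. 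Reorganizing your argument around connected components rather than individual covectors would repair both issues.
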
 

\begin{figure}
\centering
  \includegraphics[width=7cm]{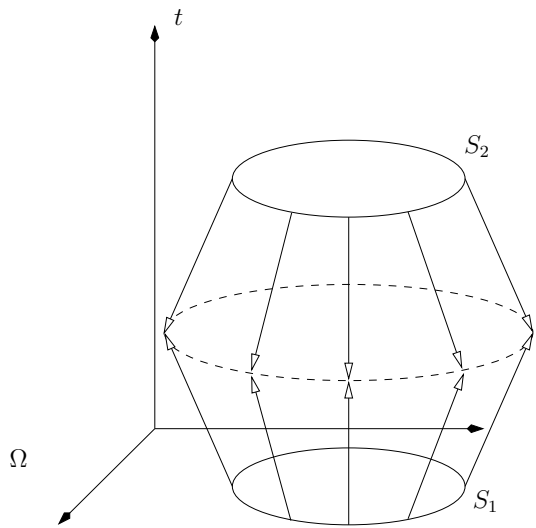}
\caption{Two wavefronts forming a spurious pairing.}
\label{pairing}
\end{figure}

Let us give an example that does not satisfy \eqref{ML2}. Let $n=2$, and let $S_1$ and $S_2$ be two identical discs, so that the initial singular supports (that is, the projections of $\WF{f_j}$, $j=1,2$, to the base space $\Omega$) are circles. Suppose that the wave speed $c$ is constant. Then at $\frac12 (t_2 - t_1)$ the outgoing wavefront from the first source and the inward wavefront from the second source pair to form a larger circle, see Figure \ref{pairing}.
Note however that if the spatial location of either of these discs is perturbed slightly, this pairing no longer occurs. Thus this type of spurious pairing does not happen generically, and so we view \eqref{ML2} as a mild hypothesis. 

We will next consider the case that the distributions $f_j$
are obtained from a single distribution $f$ via translations, and show how to recover $f$.
We will study two translation models: a Riemannian parallel transport and the Euclidean translation.  
To simplify the notation, we assume that $\Omega$ contains the origin of $\Rn$.

Let us consider the Riemannian case first. 
We assume that the Riemannian manifold $(\Omega, c^{-2} dx^2)$
is simple, that is, $\Omega$ is simply connected, $\p \Omega$ is strictly convex in the sense of the second fundamental form, and there are no conjugate points on $\Omega$.
We denote by 
$$
\mathcal T_x : T_0 \Omega \to T_x \Omega, \quad x \in \Omega,
$$
the parallel transport along the radial unit speed geodesic from the origin to $x$.
That is, for each $x$ we choose $\xi \in S_0^* \Omega$ 
and $r \ge 0$ such that $x = \gamma(r;0,\xi)$, and for each vector $v \in T_0 \Omega$ we solve the equation
$$
D_s V = 0, \quad V(0) = v,
$$
where $D_s$ is the covariant derivative of the metric $c^{-2} dx^2$
along the curve $\gamma(s;0,\xi)$.
Finally we set $\mathcal T_x v = V(r)$.

We assume that for each $j=1, 2, \dots, J$, there is $x_j \in \Omega$ such that
\begin{align}\tag{R1}\label{R1}
	f_j = f \circ \mathcal T_{x_j}^{-1} \circ \exp_{x_j}^{-1},
\end{align}
where $\exp$ is the exponential map of $(\Omega, c^{-2} dx^2)$,
$f \in \E'(T_0 \Omega)$, and the precomposition means the pullback
of $f$ by $\mathcal T_{x_j}^{-1} \circ \exp_{x_j}^{-1}$,
see e.g. \cite[Th. 6.1.2]{Hor1} for the definition. 
Note that if $c=1$ identically, then in coordinates, $\mathcal T_x$ is the identity and 
$$
f(v) = f_j \circ \exp_{x_j}(v) = f_j(x_j + v).
$$
Thus $f_j$ is obtained from $f$ by an Euclidean translation.

We assume that 
\begin{align}\tag{SS}\label{SS}
	d(x_{j+1},x_j) < t_{j+1} - t_{j}, \quad j = 1,2,\dots, J-1,
\end{align}
where $d(\cdot, \cdot)$ is the Riemannian distance function of $(\Rn, c^{-2} dx^2)$.
Note that $d(x,y)$ gives the travel time distance between points $x,y \in \Rn$.
We think of (\ref{SS}) as a condition limiting the speed at which the sources can propagate, effectively requiring this motion to be ``sub-sonic'', i.e. slower than the speed of wave propagation.
Let us emphasize
that the translation model (\ref{R1}) considers only spatial variables and
says nothing about the speed of the translation in spacetime whereas
(\ref{SS}) requires that the speed is sub-sonic.

We will prove the following theorem in Section \ref{sec_recovery_smooth}.

\begin{theorem}\label{uniqueRie}
Suppose that the Riemannian manifold $(\Omega, c^{-2} dx^2)$
is simple and that \eqref{SS} and \eqref{R1} are satisfied. Suppose furthermore that 
the times $t_j$ and the points $x_j$, $j=1,2,\dots,J$, are known. 
If 
\begin{align}
\label{T_cond}
T > t_1 + \diam{\Omega},
\end{align}
where $\diam{\Omega} = \sup_{x,y \in \Omega} d(x, y)$,
then $F$ can be recovered from the boundary data $\Lambda F$.
\end{theorem}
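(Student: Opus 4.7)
By linearity of the map $f \mapsto \Lambda F$, the theorem reduces to showing that $\Lambda F = 0$ forces $f = 0$. I decompose $u = \sum_{j=1}^J \chi_{\{t \ge t_j\}} w_j$, where each $w_j$ is the homogeneous wave solution with $w_j(t_j, \cdot) = 0$ and $\partial_t w_j(t_j, \cdot) = f_j$. Since $F$ is supported in $\overline{\Omega}$ and $u$ vanishes for $t \le 0$, the unique solvability of the exterior initial-boundary value problem with zero data gives $u \equiv 0$ on $(0, T) \times (\Rn \setminus \overline{\Omega})$; hence both the Dirichlet trace of $u$ and its Neumann trace $\partial_\nu u$ vanish on $\partial\Omega \times (0, T)$.

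On the first interval $(t_1, t_2)$ the decomposition reduces to $u = w_1$, so $w_1$ has vanishing Cauchy data on $\partial\Omega \times (t_1, t_2)$. Using $w_1(t_1, \cdot) = 0$ and the time-independence of the wave operator, I extend $w_1$ as an odd function across $t = t_1$ by setting $\widetilde{w}_1(t, x) := -w_1(2t_1 - t, x)$ for $t < t_1$; the extension is a smooth solution of the homogeneous wave equation on $\R \times \Rn$ with vanishing Cauchy data on $\partial\Omega \times (2t_1 - t_2, t_2)$. Tataru's unique continuation theorem then gives $\widetilde{w}_1 \equiv 0$ on the set $\{(t, x) : d(x, \partial\Omega) + |t - t_1| < t_2 - t_1\}$, which forces $f_1 \equiv 0$ on the shell $\{x \in \Omega : d(x, \partial\Omega) < t_2 - t_1\}$.

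I extend this vanishing to all of $\Omega$ by induction on $j$. Assuming inductively that $w_1, \ldots, w_{j-1}$ already vanish on an open spacetime region containing a neighborhood of $\partial\Omega \times [t_j, t_{j+1}]$, the identity $\sum_{k \le j} w_k = 0$ on $\partial\Omega \times (t_j, t_{j+1})$ (with $t_{J+1} := T$) reduces, modulo this known vanishing, to a zero Cauchy problem for $w_j$; reapplying the odd-extension plus Tataru argument at $t = t_j$ produces a new shell on which $f_j$ vanishes. The subsonic hypothesis \eqref{SS}, $d(x_{j+1}, x_j) < t_{j+1} - t_j$, is exactly what guarantees that this shell is wide enough to contain the next source point $x_{j+1}$, so the induction propagates into $\supp{f_{j+1}}$ via the translation structure \eqref{R1}. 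The time condition $T - t_1 > \diam{\Omega}$, together with the simplicity of $(\Omega, c^{-2} dx^2)$, guarantees that after finitely many steps the accumulated zero region covers $\Omega$, so $f_j \equiv 0$ for every $j$; by \eqref{R1} this is equivalent to $f = 0$.

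The main obstacle is the inductive step: one must show that at each source time the Tataru double-cone on $(t_j, t_{j+1})$, combined with the subsonic translation to $x_{j+1}$, yields a zero set large enough to absorb $\supp{f_{j+1}}$. This is a geometric compatibility between the available shell width $t_{j+1} - t_j$ and the Riemannian translation distance $d(x_j, x_{j+1})$, and it is precisely the subsonic hypothesis \eqref{SS} that makes it work. Simplicity of $(\Omega, c^{-2}dx^2)$ is used to define $\exp_{x_j}$ and the parallel transport $\mathcal{T}_{x_j}$ globally, and the total time budget $T > t_1 + \diam{\Omega}$ is used at the final bookkeeping step to ensure that the incremental extensions exhaust $\Omega$.
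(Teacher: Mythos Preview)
Your argument has a genuine gap in the inductive step, and the base case already gives less than you need. From $u=w_1$ on $(t_1,t_2)\times\partial\Omega$ and Tataru you only obtain $f_1=0$ on the shell $\{x:d(x,\partial\Omega)<t_2-t_1\}$. If the source sits well inside $\Omega$ this shell need not meet $\supp f_1$ at all, so the base case can be vacuous. More importantly, your inductive hypothesis ``$w_1,\dots,w_{j-1}$ vanish on a neighborhood of $\partial\Omega\times[t_j,t_{j+1}]$'' is not implied by what you proved: even if $f_1=0$ on a thin collar of $\partial\Omega$, the interior part of $f_1$ still propagates outward and makes $w_1$ nonzero on $\partial\Omega$ at later times. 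Finally, your reading of \eqref{SS} is wrong: it bounds $d(x_{j+1},x_j)$, not $d(x_{j+1},\partial\Omega)$, so it does not force $x_{j+1}$ to lie in your boundary shell.

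The paper's proof fixes exactly these issues. Instead of using only the window $(t_1,t_2)$, it invokes finite speed of propagation (Lemma~\ref{fsp}): for $j\ge 2$ the wave $w_j$ cannot have reached $y\in\partial\Omega$ as long as $t\le d(y,B_R(x_1))+(1-\rho)(t_2-t_1)$. This gives a much larger space--time set on which the boundary Cauchy data equals that of $w_1$ alone. Tataru (Lemma~\ref{ucp}) then determines $f_1$ on $\Omega_\varepsilon=\{x:\exists\,y\in\partial\Omega,\ d(x,y)\le d(y,B_R(x_1))+\varepsilon\}$, and simplicity (Lemma~\ref{peel}) identifies this set with the annulus $B_R(x_1)\setminus B_{R-\varepsilon}(x_1)$ --- a shell centered at the \emph{source}, not at the boundary. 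Only now does \eqref{R1} enter: the Riemannian translation carries this annulus to $B_R(x_j)\setminus B_{R-\varepsilon}(x_j)$ for every $j$ simultaneously, so all $f_j$ are known on their outer shells, the known pieces are subtracted from $\Lambda F$, and the argument iterates with $R$ replaced by $R-\varepsilon$. The role of \eqref{SS} is to make $\varepsilon=(1-\rho)(t_2-t_1)>0$, so that each iteration actually shrinks $R$; your induction over $j$ is replaced by an iteration that peels all sources at once from the outside in.
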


In order to combine Theorems \ref{th_microlocal} and \ref{uniqueRie}
we need to be able to determine the points $x_j$ given the supports $S_j$.
We will consider this problem in Section \ref{sec_center}.
 
Let us now describe the Euclidean translation model. 
We assume that for each $j=1, 2, \dots, J$, there is $x_j \in \Omega$ such that
\begin{align}\tag{E1}\label{E1}
	f_j(x) = f(x - x_j),
\end{align}
where $f \in \E'(\Omega)$.
Furthermore we assume that in addition to the sub-sonic condition (\ref{SS}) the following separation condition holds:
\begin{align}\tag{E2}\label{E2}
	t_2 - t_1 > \frac{1 - c^-/c^+}{1 - \rho} R
\end{align}
where $c^+ = \sup_{x \in \Omega} c(x)$, $c^- = \inf_{x \in \Omega} c(x)$
and 
\begin{align}
\label{def_rho}
\rho = \max_{j=1,\dots,J-1} \frac{d(x_{j+1},x_j)}{t_{j+1} - t_{j}},
\quad R = \max_{j=1,\dots,J}\min \{ r > 0;\ S_j \subset B_r(x_j)\}.
\end{align}
Here $B_r(x)$ is the closed geodesic ball $\{y \in \Rn;\ d(y,x) \le r \}$.
Note that (\ref{SS}) implies that $\rho \in [0,1)$.
We will prove the following theorem in Section \ref{sec_recovery_smooth}. 

\begin{theorem}\label{uniqueEuc}
Suppose that the Riemannian manifold $(\Omega, c^{-2} dx^2)$
is simple and that \eqref{SS}, \eqref{E1} and \eqref{E2} are satisfied. Suppose furthermore that 
the times $t_j$ and the points $x_j$, $j=1,2,\dots,J$, are known. 
If $T$ satisfies (\ref{T_cond}), then $F$ can be recovered from the boundary data $\Lambda F$.
\end{theorem}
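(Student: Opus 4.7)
The proof proceeds by linearity and unique continuation, paralleling the strategy for Theorem \ref{uniqueRie} with additional quantitative estimates to compensate for the mismatch between the Euclidean translation \eqref{E1} and the Riemannian wave flow induced by $c$. By linearity of $F \mapsto \Lambda F$, it suffices to show that $\Lambda F = 0$ forces $f \equiv 0$. By Duhamel's principle, write $u = \sum_{j=1}^J \chi_{\{t \ge t_j\}} w_j$, where $w_j$ solves the homogeneous wave equation on $\R \times \R^n$ with $w_j(t_j, \cdot) = 0$ and $\partial_t w_j(t_j, \cdot) = f(\cdot - x_j)$; the hypothesis then reads $\sum_{j} \chi_{\{t \ge t_j\}} w_j = 0$ on $(0,T) \times \partial\Omega$.

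My plan is to extract the contribution of the first source on a rich enough portion of the boundary data and then invoke Tataru's unique continuation theorem. Concretely, I would construct an open region $\Gamma \subset (t_1, T) \times \partial\Omega$ on which only $u_1$ contributes to the sum, so that $w_1|_\Gamma = 0$. Combined with the vanishing initial position $w_1(t_1, \cdot) = 0$ and the global assumption \eqref{T_cond}, Tataru's theorem should propagate $w_1 \equiv 0$ through $(t_1, T) \times \Omega$; in particular, $\partial_t w_1(t_1, \cdot) = f(\cdot - x_1) = 0$, yielding $f \equiv 0$. Since each $f_j$ is a translate of $f$, all $f_j$ then vanish and $F = 0$.

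The assumption \eqref{E2} enters precisely in the construction of $\Gamma$. By finite speed of propagation in the metric $c^{-2}\,dx^2$, the contribution $u_j$ for $j \ge 2$ cannot affect a boundary point $p \in \partial\Omega$ before time $t_j + d(p, S_j) \ge t_j + d(p, x_j) - R$; using the subsonic bound $d(x_j, x_1) \le \rho(t_j - t_1)$ from \eqref{SS} and the triangle inequality, a time window around $t_1 + d(p, x_1)$ remains uncontaminated by $u_2, \ldots, u_J$ at every $p$. The refinement that replaces the naive $2R$-loss by the sharper $(1 - c^-/c^+)R$ reflects the fact that the Euclidean-translated support $S_j$ is not itself a geodesic ball, so only the variation of $c$ across a region of radius $R$, not its full extent, produces worst-case wavefront slippage between fastest and slowest propagation. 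Dividing by $1 - \rho$ aggregates this slip over the subsonic motion of the source points, giving exactly the threshold in \eqref{E2} that yields a nonempty $\Gamma$.

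The main obstacle is carrying out this geometric accounting rigorously: verifying that \eqref{E2} is sharp enough to produce a $\Gamma$ whose Tataru-domain of dependence covers all of $S_1$, and that \eqref{T_cond} extends the conclusion to the full cylinder $(t_1, T) \times \Omega$. Handling the sources $j \ge 3$ is essentially bookkeeping via iterated application of \eqref{SS}, so the analytic core of the argument is the two-source comparison controlled by \eqref{E2}.
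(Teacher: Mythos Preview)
Your proposal has a genuine gap: you assume that a single application of Tataru's unique continuation from the boundary set $\Gamma$ will reach all of $S_1$, but this is false in general. Lemma~\ref{ucp} together with Lemma~\ref{peel} shows precisely what the Tataru domain of dependence from $\Gamma$ is: it is the annulus $B_R(x_1)\setminus B_{R-\varepsilon}(x_1)$ with $\varepsilon=(1-\rho)(t_2-t_1)$. Unless $\varepsilon\ge R$ (which is the strong separation condition \eqref{TS}, not \eqref{E2}), this annulus does not cover $S_1$, so your one-shot argument recovers only an outer shell of $f_1$ and cannot conclude $f\equiv 0$.

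The paper's proof is therefore not a single unique-continuation step but an \emph{iterative peeling}: one recovers $f_1$ on the outer annulus, uses \eqref{E1} to transport this knowledge to annuli around every $x_j$, subtracts the corresponding contribution from $\Lambda F$, and repeats with a smaller effective radius. The role of \eqref{E2} is not to enlarge $\Gamma$ so that Tataru reaches the center in one step; it is to guarantee that this iteration converges. Each cycle converts between geodesic and Euclidean balls (a geodesic ball of radius $R-\varepsilon$ sits inside a Euclidean ball of radius $c^+(R-\varepsilon)$, which after Euclidean translation sits inside a geodesic ball of radius $\tfrac{c^+}{c^-}(R-\varepsilon)$), and \eqref{E2} is exactly the inequality $\varepsilon>(1-c^-/c^+)R$ needed to make the new radius strictly smaller than the old. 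Your heuristic about ``wavefront slippage'' in the arrival-time analysis does not match this mechanism; the $(1-c^-/c^+)$ factor comes from the geodesic--Euclidean conversion loss in the peeling step, not from finite-speed-of-propagation estimates on the boundary.
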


If $c$ is constant, then \eqref{R1} and \eqref{E1} are equivalent and 
\eqref{E2} is trivially satisfied.
Without loss of generality we may assume that $f$ is defined so that 
the center of mass of its support is at the origin. 
Then $x_j$ is the center of mass of $S_j$
and therefore $S_j$ determines $x_j$, see Section \ref{sec_center} for more details. 
We will give further examples in Section \ref{examples}.

Let us formulate one more result where, instead of a translation assumption as above, we assume the following strong separation condition: 
\begin{align}\tag{TS}\label{TS}
	(1-\rho)(t_j - t_{j-1}) > 2R,
\end{align}
where $\rho$ is as in (\ref{def_rho}).
This condition not only limits the speed at which the source can move, but it also implies a minimum gap in time between sources (of size roughly $2R$). This condition is stronger than \eqref{E2}, but has the advantage of allowing completely distinct $f_j$
and arbitrary geometry $(\Omega, c^{-2}dx^2)$.
We prove the following theorem in Section \ref{sec_recovery_smooth}.

\begin{theorem}\label{uniqueVS}
Suppose that the conditions \eqref{SS} and \eqref{TS} 
are satisfied, and that the times $t_j$ and the supports $S_j$, $j=1,\dots,J$, are known. If $T > t_J + \diam{\Omega}$ then $F$ can be recovered from the boundary data $\Lambda F$.
\end{theorem}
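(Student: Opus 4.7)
The plan is to prove uniqueness of $F$ given $\Lambda F$ by backward induction on $j$. By linearity, it suffices to show: if the wave $u$ with source $G = F - F' = \sum_j \delta(t-t_j) g_j$ (with $g_j := f_j - f_j'$ supported in $S_j$) vanishes on $(0,T) \times \partial\Omega$, then $g_j \equiv 0$ for every $j$. The base case $j = J$ and the inductive step are structurally identical, since eliminating $g_J$ leaves an instance of the same problem with $J-1$ sources for which (SS), (TS), and $T > t_{J-1} + \diam{\Omega}$ all persist; so the core task is to show $g_J \equiv 0$.

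First I would exploit the exterior. On $(0,T) \times (\R^n \setminus \overline\Omega)$, $u$ satisfies the homogeneous wave equation with zero initial data and zero Dirichlet trace, so $u \equiv 0$ outside $\overline\Omega$ by uniqueness for the exterior IBVP; trace compatibility of the distributional solution across $\partial\Omega$ then gives $\partial_\nu u \equiv 0$ on $(0,T) \times \partial\Omega$. In the final slab $(t_J, T) \times \Omega$, $u$ satisfies the homogeneous wave equation with zero Cauchy data on the lateral boundary, and since $T - t_J > \diam{\Omega}$, Tataru's unique continuation theorem --- combined with backward-in-time IBVP well-posedness to extend vanishing down to $t_J^+$ --- yields $u \equiv 0$ throughout $(t_J, T) \times \Omega$. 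The jump relation at $t_J$ and continuity of $u$ then give $u(t_J^-, \cdot) = 0$ and $\partial_t u(t_J^-, \cdot) = -g_J$, a distribution supported in $S_J$.

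Here (TS) enters through propagation of singularities. On $(t_{J-1}, t_J)$ the solution decomposes as $u = \sum_{i < J} w_i$, where $w_i$ is the free wave with initial data $(0, g_i)$ at $t_i$; propagation of singularities along the bicharacteristic flow of the wave operator places $\ssupp{w_i(t_J, \cdot)}$ at geodesic distance $t_J - t_i$ from $\ssupp{g_i} \subset B_R(x_i)$. Telescoping (TS) across the indices gives $t_J - t_i > d(x_J, x_i) + 2R$ for every $i < J$, so that for any $y \in B_R(x_J)$ and any $z \in B_R(x_i)$ the triangle inequality yields $d(y, z) \leq 2R + d(x_J, x_i) < t_J - t_i$; hence $B_R(x_J) \cap \ssupp{w_i(t_J, \cdot)} = \emptyset$, and $\partial_t u(t_J^-, \cdot) = -g_J$ is smooth on a neighborhood of $S_J$. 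In view of the structural form of $f_J$ in (\ref{def_fj}) --- a surface distribution on $S_J$ when $\dim S_J = n-1$, or a piecewise-smooth function with jump across $\partial S_J$ when $\dim S_J = n$ --- smoothness in a neighborhood of $S_J$ forces $g_J \equiv 0$, and the induction then proceeds. The main technical hurdle is this final ``smooth implies zero'' step in the $\dim S_J = n$ case, where smooth compactly-supported densities flat at $\partial S_J$ a priori exist; addressing this rigorously calls for either a further application of Tataru in the preceding slab exploiting the newly established regularity, or a careful analysis of the conormal structure of $f_J$ at $\partial S_J$.
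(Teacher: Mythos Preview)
Your backward-induction strategy differs from the paper's and does not close. The paper works forward from $j=1$: Lemma~\ref{fsp} uses finite speed of propagation for \emph{supports} (not singular supports) to show that on the boundary region $\{(t,y):t\le d(y,B_R(x_1))+(1-\rho)(t_2-t_1)\}$ none of the later sources has arrived, so the Cauchy data there is that of $w_1$ alone; Lemma~\ref{ucp} then uses the odd extension of $w_1$ about $t_1$ together with Tataru's theorem to determine $f_1$ on a set which, under \eqref{TS}, contains all of $S_1$. One subtracts and iterates.

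Your argument breaks precisely at the step you flag. Propagation of singularities can at best tell you that $\partial_t u(t_J^-,\cdot)=-g_J$ is \emph{smooth} near $S_J$; when $\dim S_J=n$ this does not force $g_J=0$, since any smooth compactly supported density on $S_J^\inter$ is of the form (\ref{def_fj}). Neither proposed fix works: a further Tataru step in the slab $(t_{J-1},t_J)$, even with the odd reflection of $w_J$ about $t_J$, yields half-width only $t_J-t_{J-1}$, which \eqref{TS} bounds below by $2R$ but not by $d(S_J,\partial\Omega)$, so the continuation need not reach $S_J$; and the conormal structure at $\partial S_J$ carries no information about $S_J^\inter$. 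There is also a secondary error in your microlocal step: bicharacteristics transport singularities an \emph{arclength} $t_J-t_i$, not a geodesic distance $t_J-t_i$, and since Theorem~\ref{uniqueVS} makes no simplicity assumption, your inequality $d(y,z)<t_J-t_i$ does not exclude $y\in B_R(x_J)$ from $\ssupp{w_i(t_J,\cdot)}$.
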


\section{Microlocal identification}\label{identification}

In this section we prove Theorem \ref{th_microlocal}.
We define the exit time 
$$
\sigma_\Omega(x,\xi) = \max\set{t \geq 0 : \gamma(t; x,\xi) \in \overline\Omega}, \quad (x,\xi) \in T^* \Rn \setminus 0,\ x \in \overline \Omega.
$$
Let $t \in \R$ and consider the map 
$$
\Psi_t : T^* \Omega \setminus 0 \to T^* (\R \times \p \Omega),
\quad \Psi_t(x, \xi) = (t + \sigma \tau, \gamma(\sigma), \tau, \dot \gamma'(\sigma)),
$$
where $\tau = c(x)|\xi|$, $\sigma = \sigma_\Omega(x,\xi)$, $\gamma = \gamma(\cdot; x, \xi)$
and $\dot \gamma'$ is the projection of $\dot \gamma$ on $T^* \p \Omega$.
Note that $\Psi_t$ is the composition of the restriction on $\{t\} \times \Omega$, the bicharacteristic flow $\Phi$, and the restriction on $T^* (\R \times \p \Omega)$.
It is well-known that $\Psi_t$
is a local diffeomorphism if $\Omega$ is non-trapping and strictly convex.
For the convenience of the reader we give a proof here. 

\begin{lemma}
\label{lem_Psi}
Suppose that $\Omega$ is non-trapping and strictly convex as
formulated in Theorem \ref{th_microlocal}. Let $t \in \R$. Then $\Psi_t$ is an injective local diffeomorphism.
\end{lemma}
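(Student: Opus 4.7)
The strategy is to construct an explicit smooth left inverse of $\Psi_t$, which simultaneously yields injectivity and the local diffeomorphism property, since $T^*\Omega \setminus 0$ and $T^*(\R \times \p \Omega)$ both have dimension $2n$. First I would verify that $\Psi_t$ is itself smooth. Non-trapping guarantees $\sigma_\Omega(x,\xi) < \infty$ for every $(x,\xi) \in T^*\Omega \setminus 0$. Strict convexity means that $\dot\gamma(\sigma_\Omega)$ is transversal to $T_{\gamma(\sigma_\Omega)} \p\Omega$; applying the implicit function theorem to the equation $\rho(\gamma(s;x,\xi)) = 0$, where $\rho$ is a boundary defining function for $\Omega$, gives smoothness of $\sigma_\Omega$ on $T^*\Omega \setminus 0$. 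Thus $\Psi_t$ is a smooth map.

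Next, for the inverse, suppose $\Psi_t(x,\xi) = (t', y, \tau, \eta')$. By construction $\tau = c(x)|\xi| > 0$ and the bicharacteristic stays on $p^{-1}(0)$, so $c(y)|\dot\gamma(\sigma)| = \tau$ with $\sigma = \sigma_\Omega(x,\xi)$. Decompose the exit covector as $\dot\gamma(\sigma) = \eta' + \alpha \nu^*$, where $\nu^*$ is the outward unit conormal of $\p\Omega$ at $y$. Then $\alpha^2 = \tau^2/c(y)^2 - |\eta'|^2$, and the sign of $\alpha$ is forced to be positive by strict convexity together with the fact that $\Phi$ is the forward bicharacteristic flow: the forward geodesic exits $\Omega$ transversally outward, so the normal component of $\dot\gamma(\sigma)$ points outward. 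Hence
\[
\dot\gamma(\sigma) \;=\; \eta' + \sqrt{\tau^2/c(y)^2 - |\eta'|^2}\; \nu^*
\]
is recovered smoothly from $(y,\tau,\eta')$, as the radical is strictly positive by transversal exit.

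Having reconstructed $\dot\gamma(\sigma)$, and knowing $\sigma = (t'-t)/\tau$ from the time component, I would recover $(x,\xi)$ as the spatial cotangent projection of $\Phi(-\sigma; t', y, \tau, \dot\gamma(\sigma))$. Each ingredient depends smoothly on $(t', y, \tau, \eta')$, so this defines a smooth left inverse of $\Psi_t$ on its image. A smooth injection between manifolds of the same dimension that admits a smooth left inverse is a local diffeomorphism, which is the statement of the lemma.

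The main obstacle I anticipate is the recovery step: one must verify uniformly on the image that $|\eta'|^2 < \tau^2/c(y)^2$, so that the square root is smooth, and that the forward bicharacteristic orientation together with strict convexity really does pin down the sign of $\alpha$ as positive. Both points reduce to the transversality of the exit, which is precisely the content of the strict convexity assumption in Theorem \ref{th_microlocal}; but the argument must be spelled out carefully using a boundary defining function and the fact that the forward-in-time convention on $\Phi$ forces $\tau = c(x)|\xi| > 0$ throughout the flow.
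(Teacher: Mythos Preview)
Your argument is correct and shares the same ingredients as the paper's proof, but packages them more economically. Both you and the paper establish smoothness of $\sigma_\Omega$ via the implicit function theorem using transversal exit. For injectivity, the paper does exactly what you do: from $(r,y,\tau,\eta')$ it reconstructs the full exit covector $\eta' + a\nu$ by solving $|\eta' + a\nu|_g = \tau$ with the sign of $a$ fixed by the outward-exit condition, recovers $\sigma = (r-t)/\tau$, and flows backward. The difference is that the paper treats the local diffeomorphism property separately, via a direct computation in boundary normal coordinates showing that $d\Psi_t p = 0$ forces $p = 0$; you instead observe that the reconstruction formula is defined and smooth on an open subset of $T^*(\R\times\p\Omega)$ (namely where $\tau>0$ and the radicand is positive), so the chain rule applied to $L\circ\Psi_t=\mathrm{id}$ already gives injectivity of $d\Psi_t$. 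Your route avoids the boundary-normal-coordinate calculation at the cost of having to note that the inverse formula extends smoothly off the image, which it does. One small point to tighten: your expression $\alpha^2 = \tau^2/c(y)^2 - |\eta'|^2$ presumes a particular normalization of $\nu^*$ and of the norm on $\eta'$; the invariant statement is $\alpha^2 = \tau^2 - |\eta'|_h^2$ in boundary normal coordinates, which is what the paper uses.
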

\begin{proof}
We begin by showing that $\sigma_\Omega$ is smooth on $S^* \Omega$.
Let $(x_0, \xi^0) \in T^* \Omega \setminus 0$. By the non-trapping assumption 
$s_0 := \sigma_\Omega(x_0,\xi^0)$ is well-defined and by the convexity assumption $\dot \gamma(s_0; x_0, \xi^0)$ is not tangential to $\p \Omega$.
It follows from the implicit function theorem that the equation 
$\gamma(s;x,\xi) \in \p \Omega$ has a unique solution $s$
near $s_0$ that depends smoothly on $(x,\xi)$ near $(x_0, \xi^0)$.
By the convexity assumption this solution coincides with $\sigma_\Omega$
near $(x_0, \xi^0)$. This shows that $\sigma_\Omega$ is smooth
and therefore $\Psi_t$ is smooth.

We will use boundary normal coordinates 
$y := (y^1, y') \in (-\epsilon, \epsilon) \times \p \Omega$ where $\epsilon > 0$ is small.
In these coordinates the metric tensor $g := c^{-2} dx^2$ has the form 
$$
g(y) = \begin{pmatrix}
1 & 0 \\
0 & h(y)
\end{pmatrix}.
$$
We denote by $|\eta|_g$ the norm of a cotangent vector 
$\eta = (\eta_1, \eta')$ 
with respect to the metric $g$, and have
$|\eta|_g^2 = \eta_1^2 + |\eta'|_h^2$.

We show next that $\Psi_t$ is an immersion.
Let $(x_0, \xi^0) \in T^* \Omega \setminus 0$ and define $s_0$ as above. 
We denote $\phi(x,\xi) = \gamma(s_0; x, \xi)$
and $\psi(x,\xi) = \dot \gamma(s_0; x, \xi)$.
Let
$p \in T_{(x_0, \xi^0)} T^* \Omega$ satisfy $d\Psi_t p = 0$.
The third component of this equation says that $d\tau p = 0$ and 
therefore the first component implies that $d\sigma p = 0$.
Now the second and fourth components imply $d\phi p = 0$ and $d\psi' p = 0$.
As the geodesic flow is a diffeomorphism on $T^* \Rn$,
$d\phi p = 0$ and $d\psi p = 0$ imply that $p=0$.
Thus it is enough to show that $d\psi_1 p = 0$.
As the geodesic flow preserves the norm, we have
$$
0 = d\tau p = d|\psi|_g^2 p = 2 \psi_1 d\psi_1 p + d |\psi'|_h^2 p.
$$
As $\dot \gamma(s_0; x_0, \xi^0)$ is not tangential to $\p \Omega$,
we have $\psi_1 \ne 0$.
Moreover, 
$$
d |\psi'|_h^2 p
= 2 \psi_j h^{jk} d \psi_k p + \psi_j dh^{jk} d\phi p \psi_k  = 0,
$$
whence $d\psi_1 p = 0$ and we have shown that $\Psi_t$ is an immersion.
As $T^* \Omega \setminus 0$ and $T^* (\R \times \p \Omega)$ have the same dimension, $\Psi_t$ is a local diffeomorphism. 

It remains to show that $\Psi_t$ is injective. Suppose that 
$(r, y, \tau, \eta') \in T^* (\R \times \p \Omega)$
and that there is $(x,\xi) \in T^* \Omega \setminus 0$
such that $\Psi_t(x,\xi) = (r, y, \tau, \eta')$.
Then $|\eta'|_g \le \tau$ and there is a unique $a \ge 0$
such that $|\eta' + a \nu|_g = \tau$
where $\nu$ is the outward unit normal covector of $\p \Omega$.
By the convexity assumption $\gamma$ 
does not return to $\overline \Omega$ after $\sigma$,
whence $\dot \gamma(\sigma) = \eta' + a \nu$.
We have $\sigma = (r - t) / \tau$ and 
$(x,\xi) = (\beta(\sigma), \dot \beta(\sigma))$ 
where $\beta = \gamma(\cdot; y, -\eta' - a \nu)$.
\end{proof}

\begin{proof}[Proof of Theorem \ref{th_microlocal}]
We recall that 
$$
\Lambda F = u|_{(0,T) \times \p \Omega} 
= \sum_{j=1}^J \chi_{\set{t \geq t_j}}w_j|_{(0,T) \times \p \Omega}.
$$
The map $f_j \mapsto w_j|_{(t_j, T) \times \p \Omega}$
is a sum of two elliptic Fourier integral operators 
with canonical relations given by the graphs of $\Psi_{t_j}$
and the composition of 
the reflection (\ref{def_dual_reflection}) and 
$\Psi_{t_j}$ respectively, see e.g. \cite[Prop. 3]{SU}.
As $\WF{f_j}$ is symmetric with respect to the reflection (\ref{def_dual_reflection}), we consider only $\Psi_{t_j}$.
The assumption that unit speed geodesics exit $\Omega$ before time $T - t_J$
together with (\ref{ML1}) implies that 
$$
\WF{\Lambda F} = \bigcup_{j=1}^J \Psi_{t_j}(\WF{f_j}).
$$

By Lemma \ref{lem_Psi}, the map $\Psi_{t}$ is 
continuous  
and therefore it maps the connected components $\WFpm{w_j(t,\cdot)}$ of $\WF{w_j(t,\cdot)}$
to connected components of $\Psi_{t}(\WF{w_j})$
assuming that $\WF{w_j(t,\cdot)} \subset T^* \Omega$.
Let us consider two connected components $\Gamma_1$
and $\Gamma_2$ of $\WF{\Lambda F}$ and let $t \in (t_0, t_1)$ 
where $t_0 \in \R$ is chosen to be the smallest possible time 
so that $\Psi_t^{-1}(\Gamma_1 \cup \Gamma_2)$ is well-defined (that is, the image stays in $T^* \Omega$) and 
\begin{align*}
t_1 = \min \{r \in \R;\ &\text{there are $(y, \eta') \in T^* \p \Omega$
and $\tau \in \R$ such that}  
\\&
(r,y,\tau,\eta') \in \Gamma_1 \cup \Gamma_2\}.
\end{align*}
Then $\Psi_t^{-1}(\Gamma_p) = \textrm{WF}^{\sigma_p}\paren{w_{j_p}(t,\cdot)}$, $p=1,2$,
for some $\sigma_p = \pm$ and $j_p = 1, \dots,J$.
By (\ref{ML2}) the sets $\Psi_t^{-1}(\Gamma_p)$, $p=1,2$, pair under the reflection (\ref{def_dual_reflection}) if and only if 
$j_1 = j_2$, $t = t_{j_1}$ and they coincide with the sets $N_{j_1}^\pm$.

The assumption (\ref{ML1}) implies that 
there is a bijection between the connected components of $\WF{\Lambda F}$
and the sets $N_j^\pm$, $j=1,\dots,J$.
Thus we can determine the times $t_j$ and the sets $N_j^\pm$, $j=1,\dots,J$.
\end{proof}

We get the following partial data result 
by inspecting the proof of Theorem \ref{th_microlocal}:

\begin{remark}
\label{rem_partial_data}
Consider the case where we know only a restriction of $\Lambda F$, 
that is, we know $u|_{(0,T) \times \omega}$ where $\omega \subset \p \Omega$
is open. Then we can still recover the source times $t_j$, $j=1,2,\dots,J$,
assuming a stronger form of (ML2). That is, 
the connected components $\Gamma_k$, $k=1,2,\dots,K$,
of $\WF{u|_{(0,T) \times \omega}}$ are assumed to
form pairs exactly at times $t_j$ in the sense that if 
\begin{align}
\label{partial_pairing}
\Psi_t^{-1}(\Gamma_{k_1}) \cap \widetilde{\Psi_t^{-1}(\Gamma_{k_2})} \ne \emptyset
\end{align}
then $t = t_j$ for some $j$ and that for all $j$ there are $k_1$ and $k_2$
such that (\ref{partial_pairing}) holds with $t = t_j$.
\end{remark}

The condition in Remark \ref{rem_partial_data}
means firstly that $\omega$ needs to be large enough so that we catch parts of all outward and inward 
wavefronts and that the outward and inward 
parts coming from the same source do not miss each other completely when propagated back using 
$\Psi_t^{-1}$, and secondly, that there are no spurious pairings.

Note that if $\Psi_t^{-1}(\Gamma_{k_1}) \subset \WFp{w_j(t, \cdot)}$
and $\Psi_t^{-1}(\Gamma_{k_2}) \subset \WFm{w_j(t, \cdot)}$
then the projection of the intersection (\ref{partial_pairing})
on the base space $\Omega$
is a subset of $\Sigma_j$ assuming that there are no spurious pairings. We can reconstruct this subset, but 
typically we can not reconstruct the whole set $\Sigma_j$ from the partial data
by using the above microlocal argument. We will further discuss the partial data case in Remark \ref{rem_partial_data2} below.

\begin{remark}
In a procedure introduced by Ishii \textit{et al.} \cite{Ishii_2005}
and quite commonly applied in seismology, the wavefield observed in
(an open subset of) the boundary is reverse-time continued and then
restricted to a subset of a chosen hypersurface, $\Sigma \subset
\Omega$ say, 
yielding $\sum_{j=1}^J w_j |_{\Sigma}$ without determining the $t_i$
explicitly. As a matter of fact, this is done microlocally and
referred to as backprojection with stacking (over the point receivers
in the mentioned subset of the boundary). In the case $\dim S_i = n$,
we can extend this procedure using our model as follows: If $S_i \cap
\Sigma \ne \emptyset$ and there are no spurious pairings, then the
paired components of the wavefront set of $\sum_{j=1}^J w_j
|_{\Sigma,t=t_i}$ correspond to the two components of the conormal
bundle of $S_i \cap \Sigma$ in $T^* \Sigma$, and this pairing can be
recovered by our method.
\end{remark}

\section{Reconstruction of the smooth part of the source}

\subsection{Distances to geodesic balls}

We begin by establishing two lemmas. Here $(M,g)$ is a smooth compact Riemannian manifold with boundary. We define 
$$
	\sigma_p(\xi) = \sup \{t > 0;\ \exp_p(t\xi) \in M^\inter\},
	\quad p \in M^\inter,\ \xi \in S_p M,
$$
where $S M$ denotes the unit sphere bundle of $M$,
and $B_r(p) = \{x \in M;\ d(x,p) \le r\}$, $r > 0$,
where $d$ denotes the distance function of $M$.

\begin{lemma}
	\label{lem_xy_radial}
	Suppose that $\p M$ is strictly convex in the sense of the second fundamental form. Let $p \in M^\inter$ and let $R > 0$.  Suppose that $S:= B_R(p) \subset M^\inter$, and that $\p S$ is smooth. Let $y \in \p M$ and 
suppose that $x \in S$ satisfies 
	\begin{align}
		\label{x_closest}
		d(y,x) = d(y,S).
	\end{align}
	Then there is $\xi \in S_p M$ such that 
	\begin{align}
		\label{xy_radial}
		x = \exp_p(R\xi) \quad \text{and} \quad y = \exp_p(\sigma_p(\xi) \xi).
	\end{align}
\end{lemma}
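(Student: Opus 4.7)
The plan is to combine two first-order conditions. Optimality of $x$ will force the minimizing geodesic from $y$ to $x$ to hit $\partial S$ orthogonally at $x$, while the unit normal to the geodesic sphere $\partial S$ at $x$ is, by Gauss's lemma, the unit tangent of the radial geodesic from $p$. Concatenating the two segments will then yield a single geodesic from $p$ through $x$ to $y$ that realizes both identities in (\ref{xy_radial}).

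First I would check that $x \in \partial S$. Otherwise $d(p,x) < R$; pick a unit-speed minimizing curve $\alpha : [0,L] \to M$ from $y$ to $x$, which exists by compactness and, by strict convexity of $\partial M$, is a smooth geodesic with $\alpha((0,L]) \subset M^\inter$. For small $s > 0$ the point $\alpha(L - s)$ still lies in $S$ by continuity, but satisfies $d(y, \alpha(L-s)) \le L - s < L$, contradicting (\ref{x_closest}). Thus $d(p,x) = R$, and the same $\alpha$ serves as the candidate geodesic for the next step.

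Next, varying the endpoint of $\alpha$ along a smooth curve $\beta : (-\eps,\eps) \to \partial S$ with $\beta(0) = x$, the function $s \mapsto d(y, \beta(s))$ attains its minimum at $s = 0$. The first variation of arc length with the initial point $y$ fixed yields $0 = \frac{d}{ds}\big|_0 d(y, \beta(s)) = \langle \dot\alpha(L), \dot\beta(0) \rangle$, so $\dot\alpha(L)$ is conormal to $\partial S$ at $x$. Because $S \subset M^\inter$ and $\partial S$ is smooth, $d(p,\cdot)$ is $C^\infty$ in a neighborhood of $\partial S$, and Gauss's lemma identifies its gradient at $x$ with $\dot\gamma_\xi(R)$, where $\gamma_\xi(t) := \exp_p(t\xi)$ for the initial direction $\xi \in S_p M$ of a minimizing geodesic from $p$ to $x$. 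Hence $\dot\alpha(L) = \pm \dot\gamma_\xi(R)$, and the $+$ sign is excluded by the same local argument as in Step~1: it would place $\alpha(L - s)$ in $S^\inter$ for small $s > 0$, violating (\ref{x_closest}). So $\dot\alpha(L) = -\dot\gamma_\xi(R)$.

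By uniqueness of geodesics, the reversed curve $s \mapsto \alpha(L - s)$ coincides with $\gamma_\xi(R + s)$ on $[0, L]$, so $y = \gamma_\xi(R + L) = \exp_p((R + L) \xi)$. Since $y \in \partial M$ and $\partial M$ is strictly convex, $\gamma_\xi$ cannot return to $M^\inter$ after time $R + L$, giving $\sigma_p(\xi) = R + L$ and completing (\ref{xy_radial}). The main technical obstacle is the regularity bookkeeping that underpins Gauss's lemma: one must know that $x$ is not in the cut locus of $p$, which is exactly what the smoothness of $\partial S$ provides, and one must ensure that the minimizer $\alpha$ is a genuine smooth geodesic meeting $\partial S$ transversally at its endpoint rather than an arbitrary length-minimizing curve, which is where the strict convexity of $\partial M$ enters both to keep $\alpha$ off $\partial M$ and to pin down $\sigma_p(\xi)$ at the end.
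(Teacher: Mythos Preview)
Your proof is correct and follows essentially the same route as the paper's: show $x\in\partial S$, take a shortest path between $x$ and $y$, use a first-variation/shortcut argument to get orthogonality to $\partial S$, invoke Gauss's lemma to identify this normal with the radial direction $\dot\gamma_\xi(R)$, and then use strict convexity of $\partial M$ to pin down $\sigma_p(\xi)$. The only notable difference is bookkeeping: the paper cites \cite{Alexander1981} for the $C^1$ regularity of shortest paths in manifolds with boundary (which is what ultimately forces $\ell=\sigma_p(\xi)-R$), whereas you assert directly that strict convexity makes the minimizer a smooth interior geodesic---true, but it is worth being aware that this relies on the same nontrivial regularity fact.
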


\begin{proof}
	Clearly $x \in \p S$ and there is $\xi \in S_p M$ such that $x = \exp_p(R\xi)$. Let $\gamma : [0, \ell] \to M$ be a shortest path from $x$ to $y$. Then $\gamma$ is $C^1$ and we may assume without loss of generality that it has unit speed \cite{Alexander1981}.
	
	A shortcut argument shows that $\dot \gamma(0) \perp \p S$. Thus $\gamma(t)$ coincides with the path $\wt\gamma(t) = \exp_p((t+R) \xi)$ until it hits the boundary $\p M$ at $t = \sigma_p(\xi)-R$ (see Figure \ref{normal_fig}). As $\p M$ is strictly convex $\dot \gamma(\sigma_p(\xi)-R)$ is not tangential to the boundary $\p M$. This implies that $\sigma_p(\xi)-R = \ell$, since otherwise $\gamma$ can not be $C^1$.
\end{proof}

\begin{figure}
\centering
  \includegraphics[width=7cm]{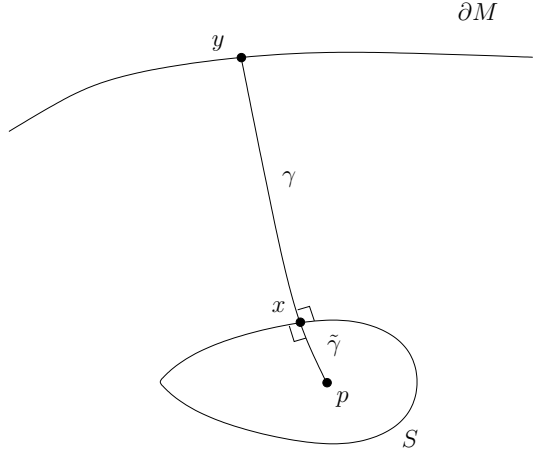}
\caption{The geodesics $\gamma$ and $\wt\gamma$ coincide.}
\label{normal_fig}
\end{figure}

In general there might exist $x \in \p S$ such that 
$$
	d(y, x) > d(y,S), \quad \text{for all $y \in \p M$}.
$$
However, in the case of a simple manifold this can not happen. 

\begin{lemma}
\label{lem_simple_radial}
	Suppose that $(M,g)$ is simple. Let $p \in M^\inter$ and let $R > 0$. Suppose that $S := B_R(p) \subset M^\inter$. Let $\xi \in S_p M$ and define $x \in \p S$ and $y \in \p M$ by \eqref{xy_radial}.
	Then \eqref{x_closest} holds.
\end{lemma}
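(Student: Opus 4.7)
The plan is to reduce the statement to a reverse triangle inequality, using the fact that on a simple manifold every geodesic segment realizes the distance between its endpoints.

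First I would record the two length identities that drive the argument. Since $(M,g)$ is simple, there are no conjugate points and $\p M$ is strictly convex, so the geodesic $s \mapsto \exp_p(s\xi)$, $s \in [0,\sigma_p(\xi)]$, is a shortest path from $p$ to $y = \exp_p(\sigma_p(\xi)\xi)$; hence $d(p,y) = \sigma_p(\xi)$. Its restriction to $[R,\sigma_p(\xi)]$ is again a minimizing geodesic, from $x = \exp_p(R\xi)$ to $y$, so that $d(x,y) = \sigma_p(\xi) - R$. Note in particular that $x \in \overline{S}$, so $d(y,S) \le d(y,x) = \sigma_p(\xi) - R$.

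Next I would bound $d(y,x')$ from below for an arbitrary $x' \in S$. By definition of $S$ we have $d(p,x') \le R$, and the triangle inequality yields
\begin{equation*}
d(y,x') \;\ge\; d(p,y) - d(p,x') \;\ge\; \sigma_p(\xi) - R \;=\; d(x,y).
\end{equation*}
Taking the infimum over $x' \in S$ gives $d(y,S) \ge d(y,x)$, which combined with the previous inequality yields $d(y,x) = d(y,S)$, i.e. the claim \eqref{x_closest}.

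The main (mild) obstacle is justifying the two length identities in the first step; both depend on the fact that in a simple manifold every geodesic segment whose endpoints lie in $M$ is minimizing, which is a standard consequence of the absence of conjugate points together with the strict convexity of $\p M$. The rest of the argument is purely metric and does not require any further microlocal input.
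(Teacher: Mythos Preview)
Your argument is correct. The two length identities $d(p,y)=\sigma_p(\xi)$ and $d(x,y)=\sigma_p(\xi)-R$ are exactly what simplicity buys (the exponential map $\exp_p$ is a diffeomorphism onto $M$, so every geodesic segment with endpoints in $M$ is minimizing), and once those are in hand the reverse triangle inequality finishes the job as you wrote.

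The paper proceeds differently: it does not compute $d(x,y)$ at all. Instead it takes a closest point $z\in\p S$ to $y$, invokes Lemma~\ref{lem_xy_radial} to write $z=\exp_p(R\zeta)$ and $y=\exp_p(\sigma_p(\zeta)\zeta)$ for some $\zeta\in S_pM$, and then uses the injectivity of $\exp_p$ (from simplicity) to force $\zeta=\xi$, hence $z=x$. So the paper's proof is an indirect identification of the minimizer via the preceding lemma, whereas yours is a direct metric computation via the triangle inequality. Your route is more elementary and self-contained; the paper's route has the virtue of exhibiting Lemmas~\ref{lem_xy_radial} and~\ref{lem_simple_radial} as converses of one another, which is the structural point being made in that subsection.
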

\begin{proof}
	Note that $\p B_R(p)$ is smooth.
	As $S$ is compact, there is a point $z \in \p S$ such that $d(y,z) = d(y,S)$.
	Lemma \ref{lem_xy_radial} implies that there is $\zeta \in S_p M$ such that
	\begin{align*}
		z = \exp_p(R\zeta) \quad \text{and} \quad y = \exp_p(\sigma_p(\zeta) \zeta).
	\end{align*}
	The map $\exp_p$ is injective by the simplicity, whence $\zeta = \xi$.
	In particular, $z = x$ and \eqref{x_closest} holds.
\end{proof}

\subsection{Unique continuation}

We recall that the following time-sharp semi-global unique continuation result follows from the seminal local result by Tataru \cite{T1}.

\begin{theorem}
\label{th_uniq_cont}
Let $h \in C(\p \Omega)$ 
and define 
$$
\Gamma(h) = \{(t,y) \in \R \times \p \Omega;\ |t| < h(y) \},
\quad T = \max_{y \in \p \Omega} h(y).
$$
Let $s \in \R$, and suppose that 
$w \in H^{s}((-T,T) \times \R^n)$
satisfies $\p_t^2 w -c^2 \Delta w = 0$ and
\begin{align}
\label{vanishing_caucy_data}
w|_{\Gamma(h)} = 0, \quad \p_\nu w|_{\Gamma(h)} =0.
\end{align}
Then $w = 0$ and $\p_t w = 0$ on $\{0\} \times \Omega(h)^\inter$,
where
$$
\Omega(h) = \{x \in \Omega;\ \text{there is $y \in \partial \Omega$ such that $d(x,y) \leq h(y)$}\}.
$$
\end{theorem}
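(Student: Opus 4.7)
The plan is to derive the theorem from Tataru's local unique continuation result \cite{T1} by a foliation-and-propagation argument that has become standard since \cite{T1,HORunique}. Because the coefficients of $\p_t^2 - c^2 \Delta$ are independent of $t$ and $\R \times \p \Omega$ is non-characteristic, I first mollify $w$ in $t$ to reduce to the case in which $w$ is smooth enough that the Cauchy data in \eqref{vanishing_caucy_data} and the target pointwise vanishing at $t=0$ make classical sense; the conclusion for $w \in H^s$ then follows by passing to the limit.

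The local statement I invoke is the following consequence of \cite{T1}: if $\Sigma$ is a $C^2$ oriented hypersurface in $(-T,T) \times \R^n$ that is non-characteristic for $\p_t^2 - c^2 \Delta$ at a point $q \in \Sigma$, and if a solution $w$ of the wave equation vanishes on one side of $\Sigma$ near $q$, then $w$ vanishes on a full neighborhood of $q$. The hypothesis that $c$ depends only on $x$, and is therefore analytic in $t$, is precisely what lets Tataru's theorem retain only the non-characteristic requirement and drop the full H\"ormander pseudoconvexity condition.

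To reach an arbitrary $x_* \in \Omega(h)^\inter$, pick $y_* \in \p \Omega$ with $d(x_*, y_*) < h(y_*)$ and introduce a foliation modelled on the optical distance from $y_*$: for $\alpha \in (0,1)$ and $\lambda > 0$, set
$$
\psi_\alpha(t,x) \;=\; d(x, y_*)^2 - \alpha^2 t^2, \qquad \Sigma_\lambda^{\alpha} \;=\; \{\psi_\alpha = \lambda\}.
$$
Using the eikonal identity $|\nabla d(\cdot, y_*)|_g \equiv 1$ for $g = c^{-2} dx^2$, a short computation shows that the principal symbol of $\p_t^2 - c^2 \Delta$ evaluated at the conormal $d\psi_\alpha$ equals $4\bigl(\lambda + \alpha^2(1-\alpha^2) t^2\bigr) > 0$ on $\Sigma_\lambda^{\alpha}$, so every such hypersurface is non-characteristic. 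For small $\lambda > 0$ a patch of $\Sigma_\lambda^{\alpha}$ sits inside $\Gamma(h)$ near $(0,y_*)$, where $w$ vanishes by hypothesis. Increasing $\lambda$ toward $d(x_*, y_*)^2$ (and then letting $\alpha \nearrow 1$) sweeps $\Sigma_\lambda^{\alpha}$ inward through a connected region and brings a hypersurface through $(0,x_*)$. Iterating Tataru's theorem along this sweep via the usual compactness-and-connectedness argument in $\lambda$ propagates the vanishing of $w$ to a neighborhood of $(0,x_*)$, which yields $w(0,x_*) = \p_t w(0,x_*) = 0$. Since $x_*$ was arbitrary, the conclusion holds on $\{0\} \times \Omega(h)^\inter$.

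The main obstacle is executing the sweep cleanly: one must stay inside the region where $d(\cdot, y_*)$ is smooth (avoiding the cut locus of $y_*$), check at each stage that $\Sigma_\lambda^{\alpha}$ genuinely overlaps the region of already-known vanishing so that the local theorem applies, and verify that as $\alpha \nearrow 1$ and $\lambda \nearrow d(x_*, y_*)^2$ the endpoint $(0,x_*)$ is reached before the sweep exits $(-T,T) \times \R^n$. The condition $d(x_*, y_*) < h(y_*) \le T$ gives exactly enough room for this, and the sharpness of $\Omega(h)$ reflects the fact that the optical distance $d(\cdot, y_*)$ matches the characteristic speed of $\p_t^2 - c^2 \Delta$ via $|\nabla d|_g \equiv 1$.
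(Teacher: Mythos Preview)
Your argument is essentially correct and is the standard ``direct'' derivation from Tataru's local theorem, but it is not the route the paper takes. The paper does not redo the foliation argument: it cites the semi-global result for the special case $s=1$ and $\Gamma(h)$ a cylinder (i.e.\ $h$ constant) from \cite[Th.~3.16]{Katchalov2001}, and then reduces the general statement to that case by (i) approximating the graph region $\Gamma(h)$ by a union of cylinders, and (ii) mollifying $w$ in time to pass from $H^s$ to the smooth setting, exactly as you do. So you share the regularity reduction with the paper, but you replace the cylinder-approximation step by an explicit sweep with the level sets of $d(x,y_*)^2-\alpha^2 t^2$. Your route is more self-contained and makes the time-sharpness transparent through the computation $p(d\psi_\alpha)=4(\lambda+\alpha^2(1-\alpha^2)t^2)>0$; the paper's route is shorter because it exports the propagation argument to a textbook reference.

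One small wrinkle in your sketch: you write that for small $\lambda$ a patch of $\Sigma_\lambda^\alpha$ ``sits inside $\Gamma(h)$,'' but $\Gamma(h)\subset\R\times\p\Omega$ is a piece of the lateral boundary, not an open set in which $w$ is already known to vanish. The hypothesis gives only vanishing Cauchy data on that boundary piece, so you need an initial application of Tataru's theorem across the non-characteristic hypersurface $\R\times\p\Omega$ (using both $w|_{\Gamma(h)}=0$ and $\p_\nu w|_{\Gamma(h)}=0$) to obtain $w=0$ in a thin collar neighborhood of $(0,y_*)$ inside $\Omega$; only then can the $\lambda$-sweep begin. This, together with the cut-locus issue you already flag, are the points to make precise if you want the argument to stand on its own.
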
 

See \cite[Th. 3.16]{Katchalov2001} for a proof in the case $s=1$ and $\Gamma(h)$ is a cylinder in $(-T,T) \times \p \Omega$. The general case, that is folklore, can be reduced to this special case by approximating $\Gamma(h)$ with a union of cylinders and by approximating $w$ with a smooth function. 
We refer to \cite{SU} where an analogous reduction using cylinders is given, and discuss only the smooth approximation of $w$.
Note that the traces of $w$ in Theorem \ref{th_uniq_cont}
are well-defined in the sense of \cite[Corollary 8.2.7]{Hor1} since $\WF{w}$ is a subset of the characteristic set $p^{-1}(0)$. 

Let $\epsilon > 0$, $\psi \in C_0^\infty(-\epsilon,\epsilon)$, let us extend $w$ 
by zero to $\R \times \R^n$
while denoting the extension still by $w$,
and let $\tilde w$ be the convolution in the time variable $\tilde w = \psi * w$.
As the operator $\p_t^2 - c^2 \Delta$
commutes with the map $w \mapsto \psi*w$,
the distribution $\tilde w$ satisfies
\begin{align}
\label{wave_eq_tildew}
\p_t^2 \tilde w -c^2 \Delta \tilde w = 0 \quad 
\text{in $I_\epsilon \times \R^n$},
\end{align}
where $I_\epsilon = (-T+2\epsilon, T-2\epsilon)$.
Moreover, (\ref{vanishing_caucy_data}) 
implies that 
$\tilde w = 0$ and $\p_\nu \tilde w =0$
on $\Gamma(h-2\epsilon)$.
We will show below that $\tilde w \in C^\infty(I_\epsilon \times \R^n)$, and therefore we may apply Theorem \ref{th_uniq_cont}
with $s=1$ to obtain $\tilde w = 0$
and $\p_t \tilde w = 0$ on $\{0\} \times \Omega(h-2\epsilon)$.
Letting $\psi \to \delta$
in the sense of distributions and $\epsilon \to 0$, we conclude that 
$w = 0$ and $\p_t w = 0$ on $\{0\} \times \Omega(h)^\inter$.
It remains to show that $\tilde w$ is smooth.
Clearly $\tilde w \in C^\infty(I_\epsilon; H^s(\R^n))$
and (\ref{wave_eq_tildew}) implies that 
$\Delta \tilde w(t) \in H^s(\R^n)$, $t \in \R$.
Thus $\tilde w(t) \in H^{s+2}(\R^n)$, $t \in \R$,
and we see that $\tilde w$ is smooth by using an induction.

\subsection{Recovery under the translation and separation conditions}
\label{sec_recovery_smooth}

To simplify the notation, we will assume below without loss of generality that $t_1 = 0$ and $x_1 = 0$. 

\begin{lemma}
\label{lem_SS}
Let $x_j \in \Omega$, $j=1,2,\dots,J$ satisfy (\ref{SS}) 
and define $\rho \in [0,1)$ by (\ref{def_rho}).
Let $r > 0$. Then for any $j,k=1,\dots,J$
and any $y \in B_r(x_j)$ there exists $x \in B_r(x_k)$ so that
\begin{align*}
	d(x,y) \leq \rho \abs{t_j - t_{k}}.
\end{align*}
\end{lemma}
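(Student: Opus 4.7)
\medskip

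\noindent\textbf{Proof plan.} The approach is in two stages: first promote the per-step bound in (\ref{SS}) to an aggregate bound $d(x_j, x_k) \le \rho |t_j - t_k|$ for all pairs of indices, and then construct $x$ explicitly by walking along a minimizing geodesic from $y$ to $x_k$.

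For the first step I would assume without loss of generality that $j < k$ (the case $j = k$ is trivial, taking $x = y$) and telescope: by the definition of $\rho$ in (\ref{def_rho}), $d(x_{i+1}, x_i) \le \rho (t_{i+1} - t_i)$ for each $i$, so the triangle inequality gives
\begin{equation*}
d(x_j, x_k) \;\le\; \sum_{i=j}^{k-1} d(x_{i+1}, x_i) \;\le\; \rho \sum_{i=j}^{k-1}(t_{i+1} - t_i) \;=\; \rho(t_k - t_j).
\end{equation*}

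For the second step, fix $y \in B_r(x_j)$ and split into two cases according to whether $y$ is already close to $x_k$. If $d(y, x_k) \le \rho|t_j - t_k|$, take $x = x_k$; then $x \in B_r(x_k)$ trivially and $d(x,y) \le \rho|t_j - t_k|$ as required. Otherwise, let $\alpha$ be a unit-speed minimizing geodesic in $(\Omega, c^{-2} dx^2)$ from $y$ to $x_k$ with length $L = d(y, x_k)$, and define $x = \alpha(\rho|t_j - t_k|)$, which is well-defined since $\rho|t_j - t_k| < L$. By construction $d(x,y) \le \rho |t_j - t_k|$. To see $x \in B_r(x_k)$, combine the triangle inequality $L \le d(y,x_j) + d(x_j, x_k) \le r + \rho|t_j - t_k|$ with the obvious bound $d(x, x_k) \le L - \rho|t_j - t_k|$ to get $d(x, x_k) \le r$.

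The only mildly delicate point, and what I would flag as the main technical obstacle, is ensuring that a minimizing geodesic from $y$ to $x_k$ in $(\Omega, c^{-2} dx^2)$ actually exists and that its trace lies inside $\Omega$, since the lemma is stated in the general setting used for Theorem \ref{uniqueVS} without any simplicity hypothesis. This is handled once one interprets $d$ as the intrinsic distance of the length space $(\overline{\Omega}, c^{-2} dx^2)$ and applies a standard Hopf--Rinow-type compactness argument; no curvature or injectivity information is needed. With this in hand, neither step involves any estimate beyond the triangle inequality, and in particular the hypothesis $\rho < 1$ plays no role in the argument itself, only in guaranteeing that the conclusion is a meaningful (strict subsonic) statement.
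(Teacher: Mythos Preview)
Your proof is correct and follows essentially the same route as the paper: telescope (\ref{SS}) to get $d(x_j,x_k)\le\rho\,|t_j-t_k|$, then choose $x$ on a minimizing geodesic from $y$ toward $x_k$ and finish with the triangle inequality (the paper's variant is to take $x$ as the closest point of $B_r(x_k)$ to $y$, which lies on that same geodesic). One small correction to your flagged obstacle: in the paper $d$ is the distance of $(\R^n, c^{-2}dx^2)$, not of $\Omega$, so Hopf--Rinow gives the minimizing geodesic directly and no intrinsic/length-space argument on $\overline\Omega$ is needed.
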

\begin{proof}
Suppose first that $j < k$ and note that (\ref{SS}) implies that 
$$
d(x_k, x_j) \le d(x_k, x_{k-1}) + d(x_{k-1}, x_{k-2}) +\dots + d(x_{j+1}, x_j)
= \rho (t_{k} - t_{j}).
$$
Combining this with an analogous computation in the case $j > k$ yields
\begin{align}
\label{SS2}
d(x_k, x_j) \le \rho|t_{k} - t_{j}|, \quad j,k = 1,\dots,J. 
\end{align}
Let $x$ be the closest point in $B_r(x_k)$ to $y$. Then the geodesic from $y$ to $x$ hits $\partial B_r(x_k)$ normally by \cite[Corollary 26]{On}, whence $d(y,x) = d(y,x_k) - d(x_k,x) = d(y,x_k) - R$. 
We conclude by observing that $d(y,x_k) \leq d(y,x_j) + d(x_j,x_k) \le R + \rho\abs{t_j - t_k}$. 
\end{proof}

The recovery of the smooth part is based on finite speed of propagation and unique continuation as described in the following two lemmas respectively. Briefly, first we will show that there is a gap in time where only signals from the first source have arrived; this is illustrated in Figure \ref{fspucp_fig}. Then we use unique continuation to determine $f_1$ in part of $S_1$.

\begin{lemma}\label{fsp}
Let $x_j \in \Omega$, $j=1,2,\dots,J$ satisfy (\ref{SS}) 
and define $\rho \in [0,1)$ and $R > 0$ by (\ref{def_rho}).
Consider the solutions $w_j$, $j=1,2,\dots,J$, of (\ref{PDEw}).
	If $(t,y) \in (0,T) \times \partial\Omega$ satisfies
	\begin{align*}
          		t \le d(y,B_R(x_1)) + (1-\rho)t_2,
	\end{align*}
	then $\chi_{\{t \ge t_j\}} \p_\nu^k w_j(t,y) = 0$ for all $k$ and for all $j \geq 2$.
\end{lemma}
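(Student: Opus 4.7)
The plan is to combine finite speed of propagation for the wave equation solved by $w_j$ with the triangle inequality estimate furnished by Lemma \ref{lem_SS}. Since the initial data for $w_j$ lives at time $t_j$ on $S_j \subset B_R(x_j)$, finite speed of propagation gives that for $t \ge t_j$ the support of $w_j(t,\cdot)$ is contained in the closed $(t-t_j)$-neighborhood of $S_j$, and hence in $\{x \in \Rn : d(x, B_R(x_j)) \le t-t_j\}$. It therefore suffices to show that, under the stated hypothesis, the point $y$ lies outside this set for every $j \ge 2$.

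First I would derive the key estimate $d(y, B_R(x_j)) \ge d(y, B_R(x_1)) - \rho t_j$. Applying Lemma \ref{lem_SS} with $r = R$, for every $z \in B_R(x_j)$ there is $z' \in B_R(x_1)$ with $d(z,z') \le \rho|t_j - t_1| = \rho t_j$, using the normalization $t_1 = 0$. The triangle inequality yields $d(y,z) \ge d(y, B_R(x_1)) - \rho t_j$, and infimizing over $z \in B_R(x_j)$ gives the claim.

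Plugging the hypothesis $t \le d(y, B_R(x_1)) + (1-\rho) t_2$ into this bound produces
\begin{align*}
d(y, B_R(x_j)) \ge t - (1-\rho)t_2 - \rho t_j = (t-t_j) + (1-\rho)(t_j - t_2).
\end{align*}
For $j \ge 2$ the time ordering $t_j \ge t_2$ makes the second term non-negative, so $d(y, B_R(x_j)) \ge t - t_j$. If $t < t_j$, the indicator $\chi_{\{t \ge t_j\}}$ kills the expression; if $t \ge t_j$, then $y$ sits outside $\supp w_j(t,\cdot)$, so $w_j$ vanishes identically on a spacetime neighborhood of $(t,y)$ and in particular $\p_\nu^k w_j(t,y) = 0$ for every $k$.

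The only delicate case I anticipate is the borderline $j=2$ with $d(y, B_R(x_2)) = t - t_2$, where $(t,y)$ lands exactly on the leading wavefront of $w_2$ and the distributional normal derivatives need not be zero pointwise. I expect this to be inconsequential: such equality points form a nowhere-dense subset of the region defined by the hypothesis, and the lemma is used downstream only through the unique continuation result of Theorem \ref{th_uniq_cont}, which requires vanishing only on an open piece of the boundary.
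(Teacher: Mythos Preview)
Your proof is correct and follows essentially the same route as the paper: both reduce to showing $d(y,B_R(x_j)) \ge t - t_j$ for $j\ge 2$ via Lemma~\ref{lem_SS} and the triangle inequality, then invoke finite speed of propagation. Your discussion of the borderline equality case is in fact more careful than the paper's own argument, which simply asserts the non-strict inequality suffices.
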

\begin{proof}	
We write $B_j = B_R(x_j)$, $j=1,\dots,J$.
	Since $d(y,S_j) \geq d(y,B_j)$, by finite speed of propagation, it will be sufficient to show
	\begin{align}\label{fsp2}
          		d(y,B_j) \ge t - t_j, \quad t \ge t_j,\ j \geq 2.
	\end{align}     
Let $z$ be the closest point to $y$ in $B_j$.  
By Lemma \ref{lem_SS}, there is $x \in B_1$ such that $d(z, x) \le \rho t_j$.
Thus 
$$
t - d(y, B_1) \le (1 - \rho)t_2 \le (1-\rho)t_j \le t_j - d(z, x).
$$
Hence
$$
t-t_j \le  d(y, B_1) - d(z, x)
\le d(y,x) - d(z,x) \le d(y,z) = d(y,B_j).
$$
\end{proof}

\begin{figure}
		\centering
        		\includegraphics[width=10cm]{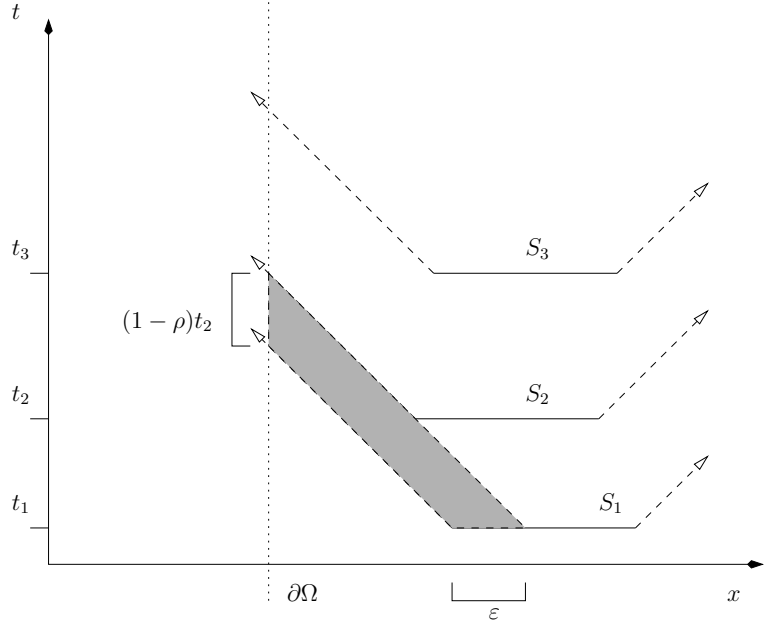}
	\caption{The gray area is affected only by the first source. Here, $t_1~=~0$.}
	\label{fspucp_fig}
\end{figure}

\begin{lemma}\label{ucp}
Let $x_j \in \Omega$, $j=1,2,\dots,J$ satisfy (\ref{SS}) 
and define $\rho \in [0,1)$ and $R > 0$ by (\ref{def_rho}).
We write
\begin{align}
\label{def_eps}
\vareps_0 = (1-\rho)t_2, \quad B_1 = B_R(x_1),
\end{align}
and let $\vareps \in (0,\vareps_0]$.
If $T > \max_{y \in \p \Omega} d(y,B_1) + \vareps$
then $f_1$ is uniquely determined by $\Lambda F$ in the interior of the set 
$$
\Omega_\vareps = 
\set{x \in \Omega;\ \text{there is $y \in \partial \Omega$ such that $d(x,y) \leq d(y,B_1) + \vareps$}}.
$$
\end{lemma}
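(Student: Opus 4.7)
The plan is to apply the unique continuation Theorem \ref{th_uniq_cont} to the difference of two candidate solutions. Specifically, suppose $F, F'$ are two sources of the form \eqref{def_F} giving $\Lambda F = \Lambda F'$, and write $\tilde w := w_1 - w_1'$, $g := f_1 - f_1'$, $B_1 := B_R(x_1)$. Then $\tilde w$ solves the wave equation on $\R \times \R^n$ with $\tilde w(0,\cdot) = 0$ and $\partial_t \tilde w(0,\cdot) = g$, where $g$ is compactly supported in $B_1 \subset \Omega$. Choose $h(y) := d(y, B_1) + \varepsilon$; one checks that $\Omega(h) = \Omega_\varepsilon$ and $\max_{y \in \p\Omega} h(y) < T$ by hypothesis. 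The whole task reduces to verifying $\tilde w|_{\Gamma(h)} = 0$ and $\partial_\nu \tilde w|_{\Gamma(h)} = 0$, since Theorem \ref{th_uniq_cont} then forces $g = \partial_t \tilde w(0,\cdot) = 0$ on $\Omega(h)^\inter = \Omega_\varepsilon^\inter$.

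For the Dirichlet vanishing, I will invoke Lemma \ref{fsp}: on the set $\Gamma_\varepsilon := \{(t, y) \in (0, T) \times \p\Omega : t \leq d(y, B_1) + \varepsilon\}$, the truncated waves $\chi_{\{t \geq t_j\}} w_j$ and $\chi_{\{t \geq t_j\}} w_j'$ vanish identically for every $j \geq 2$, so on $\Gamma_\varepsilon$ the traces reduce to $u|_{\Gamma_\varepsilon} = w_1|_{\Gamma_\varepsilon}$ and $u'|_{\Gamma_\varepsilon} = w_1'|_{\Gamma_\varepsilon}$; the assumption $\Lambda F = \Lambda F'$ then yields $\tilde w|_{\Gamma_\varepsilon} = 0$. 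To pass to the time-symmetric set $\Gamma(h)$ required by the theorem, I extend $\tilde w$ to negative times by the odd reflection $\tilde w(t, x) := -\tilde w(-t, x)$ for $t < 0$. This extension is $C^2$ across $t = 0$ because $\tilde w(0, \cdot) = 0$ and $\partial_t^2 \tilde w(0, \cdot) = c^2 \Delta \tilde w(0, \cdot) = 0$, so it remains a distributional solution of the wave equation on $\R \times \R^n$ and yields $\tilde w|_{\Gamma(h)} = 0$.

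The hard part will be the Neumann vanishing $\partial_\nu \tilde w|_{\Gamma(h)} = 0$, since $\Lambda F$ supplies only the Dirichlet trace directly. Here the key is that $\tilde w$ is a solution on the whole of $\R \times \R^n$, not merely an IBVP solution on $\Omega$. Restricted to the exterior $\R^n \setminus \overline{\Omega}$, the Cauchy data of $\tilde w$ at $t = 0$ is identically zero (because $\supp{g} \subset \Omega$), and the Dirichlet trace on $\Gamma_\varepsilon$ is zero by the previous paragraph; a finite-speed-of-propagation (domain-of-dependence) argument for the exterior wave equation then forces $\tilde w$ to vanish on an open neighborhood of $\Gamma_\varepsilon$ inside $(0, T) \times (\R^n \setminus \overline{\Omega})$, specifically on the causal future of $\Gamma_\varepsilon \cup (\{0\} \times (\R^n \setminus \overline{\Omega}))$ with respect to the metric $c^{-2} dx^2$. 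Because $\tilde w$ is smooth across $\p\Omega$ as a global solution, the one-sided normal derivatives match, so $\partial_\nu \tilde w|_{\Gamma_\varepsilon} = 0$; odd extension propagates this to all of $\Gamma(h)$. With both Cauchy traces vanishing on $\Gamma(h)$, Theorem \ref{th_uniq_cont} produces the conclusion $g = 0$ on $\Omega_\varepsilon^\inter$.
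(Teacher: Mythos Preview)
Your argument is correct and follows essentially the same route as the paper: isolate the contribution of the first source on $\Gamma_\varepsilon$ via Lemma~\ref{fsp}, exploit oddness in time to pass to the symmetric set $\Gamma(h)$, and invoke Theorem~\ref{th_uniq_cont} with $h(y)=d(y,B_1)+\varepsilon$.

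Two small points where the paper is cleaner. For the Neumann trace, the paper works with $u$ rather than with $\tilde w=w_1-w_1'$: solving the exterior Dirichlet problem with boundary data $\Lambda F$ and zero initial data recovers $\partial_\nu u$ on all of $(0,T)\times\partial\Omega$ by global uniqueness, and then Lemma~\ref{fsp} (which already asserts vanishing of $\partial_\nu^k w_j$ for all $k$, $j\ge 2$) gives $(u,\partial_\nu u)=(w_1,\partial_\nu w_1)$ on $\Gamma_\varepsilon$ at once. Your localized domain-of-dependence argument for $\tilde w$ in the exterior is valid, but it is extra work precisely because $\tilde w|_{\partial\Omega}$ is only known to vanish on $\Gamma_\varepsilon$; routing through $u-u'$ (whose Dirichlet trace vanishes on the whole lateral boundary) turns that step into a one-line exterior uniqueness statement. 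Secondly, the $C^2$ justification for the odd extension is not warranted when $g\in\mathcal E'(\Omega)$ is merely a distribution, and it is also unnecessary: $w_1$ (hence $\tilde w$) is already defined for all $t\in\R$ as the solution of the Cauchy problem with data $(0,f_1)$ at $t=t_1=0$, and is automatically odd in $t$, which is exactly what the paper uses.
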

\begin{proof}
By solving the exterior problem 
\begin{align*}
\begin{cases}
\partial^2_{t} u - c(x)^2 \Delta u = 0 & \textrm{ in } (0,T) \times \Rn \setminus \Omega, 
\\
u|_{x \in \p \Omega} = \Lambda F & \textrm{ in } (0,T) \times \p \Omega,
\\
u(0,\cdot) = \D_t u(0,\cdot) = 0 & \textrm{ in } \Rn \setminus \Omega,
\end{cases}
\end{align*} 
we recover $\p_\nu u|_{(0,T) \times \p \Omega}$.
We define $H_0 = (u, \p_\nu u)$ and
\begin{align*}
H(t,y) &= \begin{cases} H_0(t,y), &  t \in (0,T) \\
			-H_0(-t,y),&  t \in (-T,0),
			\end{cases}
\quad y \in \p \Omega.
\end{align*}
We set $h(y) = d(y,B_1) + \vareps$, $y \in \p \Omega$, and define $\Gamma(h)$
as in Theorem \ref{th_uniq_cont}.
Lemma \ref{fsp} implies that 
$H = (w_1, \p_\nu w_1)$ on 
$\Gamma(h) \cap (0,T) \times \p \Omega$,
and we have assumed that $\max_{y \in \p \Omega} h(y) < T$.
As $t_1 = 0$ and $w_1$ satisfies (\ref{PDEw}), $w_1$ is odd as a function of time. Therefore $H = (w_1, \p_\nu w_1)$ on $\Gamma(h)$,
and Theorem \ref{th_uniq_cont} implies that $f_1 = \partial_t w_1(0,\cdot)$ is uniquely determined by $H$ on the set $\Omega_\vareps^\inter$.
\end{proof}	 

\begin{proof}[Proof of Theorem \ref{uniqueVS}]
We use the notations from Lemma \ref{ucp}.
Recall that we have assumed $\vareps_0 > 2R$.
We take $\vareps = \diam{B_1} \le 2R$ and 
observe that $T$ satisfies the 
inequality in Lemma \ref{ucp} by 
the assumption $T > t_J + \diam{\Omega}$.
Lemma \ref{ucp} implies that $f_1$
is determined on $\Omega_\vareps$
and our choice of $\vareps$ implies that 
$B_R(x_1) \subset \Omega_\vareps$.
Thus $f_1$ is determined. 

We solve the wave equation (\ref{PDEu})
with $F$ replaced by $F_0 = \delta(t-t_1) f_1$.
Then we can determine $\Lambda F_1 = \Lambda F - \Lambda F_0$
where $F_1 = \sum_{j=2}^J\delta(t-t_j) f_j$.
We iterate the above steps to recover $f_2$, \dots, $f_J$.
\end{proof}

\begin{remark}
\label{rem_partial_data2}
Let us consider again the partial data case in 
Remark \ref{rem_partial_data}.
By that remark, we can recover the source times $t_j$, $j=1,2,\dots,J$.
Analogously to Lemma \ref{ucp} 
and Theorem \ref{uniqueVS}, it is possible to apply unique continuation to recover a part of $f_1$
and even the whole $F$ if a strong enough separation condition is satisfied. 
\end{remark}

\begin{lemma}\label{peel}
Suppose that $(\Omega, c^{-2} dx^2)$ is simple and 
define $\Omega_\vareps$ as in Lemma \ref{ucp}. Then 
$$
\Omega_\vareps = (B_R(0) \bs B_{R - \vareps}(0)) \cap \Omega.
$$
\end{lemma}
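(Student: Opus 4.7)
My plan is to reduce the equality to an explicit one-dimensional computation along radial geodesics from $0$, leveraging simplicity and the preliminary Lemmas \ref{lem_xy_radial} and \ref{lem_simple_radial}. For each $\xi \in S_0 \Omega$, let $y(\xi) := \exp_0(\sigma_0(\xi)\xi) \in \p \Omega$ denote the radial exit point; by Lemma \ref{lem_simple_radial} applied at $p=0$, the closest point of $B_R(0)$ to $y(\xi)$ is $\exp_0(R\xi) \in \p B_R(0)$, so $d(y(\xi), B_R(0)) = \sigma_0(\xi) - R$. Writing $x \in \Omega \setminus \{0\}$ as $x = \exp_0(r\xi)$ with $r = d(x,0)$, simplicity ensures the full radial geodesic is minimizing, so $d(x, y(\xi)) = \sigma_0(\xi) - r$. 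Combining these gives the key one-parameter formula
\begin{equation*}
d(x, y(\xi)) - d(y(\xi), B_R(0)) = R - r.
\end{equation*}

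For the inclusion $\supseteq$, if $x \in (B_R(0) \setminus B_{R-\vareps}(0)) \cap \Omega$ then $R - \vareps < r \le R$; the display with witness $y(\xi)$ yields $R - r \in [0, \vareps)$, so $x \in \Omega_\vareps$. For the inclusion $\subseteq$, fix $x \in \Omega_\vareps$ with witness $y \in \p\Omega$. The triangle inequality $d(y, 0) \le d(y, x) + d(x, 0)$ combined with $d(y, B_R(0)) = d(y, 0) - R$ yields $d(x, 0) \ge R - \vareps$, placing $x$ outside $B_{R-\vareps}(0)^\inter$. The reverse bound $d(x, 0) \le R$ requires showing that the infimum $\inf_{y \in \p\Omega}[d(x, y) - d(y, B_R(0))]$ is attained along the radial ray through $x$ at the value $R - r$, which forces $R - r \le \vareps$ from above and, via the perpendicularity condition in Lemma \ref{lem_xy_radial}, $R - r \ge 0$ from below.

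I expect the lower bound $R - r \ge 0$ on the infimum (equivalently $d(x, 0) \le R$) to be the main obstacle, since at face value the inequality defining $\Omega_\vareps$ also admits witnesses with $r > R$. Handling it requires the rigidity built into Lemma \ref{lem_xy_radial}: a minimizing path from $y \in \p \Omega$ to $B_R(0)$ hits $\p B_R(0)$ normally, and by Lemma \ref{lem_simple_radial} such a path coincides with the radial geodesic from $0$ to $y$, so a witness $y$ with $d(x, y) < d(y, B_R(0))$ would force $x$ to sit on this radial geodesic strictly beyond $\p B_R(0)$, and then a shortcut past the foot of the normal on $\p B_R(0)$ would contradict minimality. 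Once this is established, the one-dimensional computation $R - r \in [0, \vareps)$ delivers the stated annulus exactly.
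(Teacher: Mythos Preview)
Your plan for the inclusion $(B_R(0)\setminus B_{R-\vareps}(0))\cap\Omega \subset \Omega_\vareps$ is correct and is exactly the paper's argument: use the radial boundary point $y(\xi)$ as witness and the identity $d(x,y(\xi))-d(y(\xi),B_R(0))=R-r$ coming from Lemma~\ref{lem_simple_radial}. Likewise, your triangle-inequality step giving $d(x,0)\ge R-\vareps$ for $x\in\Omega_\vareps$ is correct and matches the paper's ``clear'' first observation (up to the harmless boundary value $r=R-\vareps$, which actually \emph{is} in $\Omega_\vareps$ by your own formula).

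The genuine gap is the remaining half of $\subseteq$, namely $d(x,0)\le R$: this is simply false, and your ``rigidity/shortcut'' sketch cannot be completed. Your own one-parameter formula disproves it: if $x=\exp_0(r\xi)\in\Omega$ with $r>R$, then with the radial witness $y(\xi)$ one has
\[
d(x,y(\xi)) - d\bigl(y(\xi),B_R(0)\bigr) = R - r < 0 \le \vareps,
\]
so $x\in\Omega_\vareps$ even though $x\notin B_R(0)$. There is no minimality to contradict here; the point $x$ lies on the radial segment between $\exp_0(R\xi)$ and $y(\xi)$, and the defining inequality of $\Omega_\vareps$ is satisfied with room to spare.

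In fact the lemma as displayed is mis-stated. The paper's own proof does not establish the displayed equality; it proves the complementary characterization
\[
\Omega\setminus\Omega_\vareps \;=\; \Omega\cap B_{R-\vareps}(0)^\inter,
\qquad\text{i.e.}\qquad
\Omega_\vareps \;=\; \Omega\setminus B_{R-\vareps}(0)^\inter,
\]
by taking $z\in\Omega\setminus\Omega_\vareps$, writing $z=\exp_0(s\xi)$, and showing first $s<R$ and then $s<R-\vareps$ via the radial geodesic through $z$. Only the inclusion $B_R(0)\setminus B_{R-\vareps}(0)\subset\Omega_\vareps$ is used downstream (in Theorem~\ref{uniqueRie}, where $\supp f_1\subset B_R(0)$), and you have proved that part correctly. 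Drop the claim $r\le R$ and replace the target equality by $\Omega_\vareps=\Omega\setminus B_{R-\vareps}(0)^\inter$; then your radial computation and your triangle-inequality bound already give both inclusions.
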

\begin{proof}	
	  Clearly $\Omega \cap B_{ R-\vareps}(0)^\inter \subset \Omega \setminus \Omega_\vareps$ even if simplicity is not assumed. Let $z \in \Omega \setminus \Omega_\vareps$ and choose $\xi \in S_0 \Omega$ and $s \ge 0$ such that $z = \exp_0(s \xi)$. It is sufficient to show that $z \in B_{ R-\vareps}(0)^\inter$.
We define $x$ and $y$ by (\ref{xy_radial}), i.e. $x = \exp_0(R\xi)$ and $y = \exp_0(\tau_0(\xi)\xi)$. 

	First, suppose that $s \ge R$. As $z$ is in between $x$ and $y$ on the geodesic $t \mapsto \exp_0(t\xi)$, and as all the geodesics are distance minimizing on a simple manifold, we have 
	\begin{align*}
		d(z,y) \le d(x,z) + d(z,y) = d(x,y) = d(y,B_1), 
	\end{align*}
	which contradicts $z \in \Omega \setminus \Omega_\vareps$, and therefore we have shown that $s < R$.

	Next, as $x$ is in between $z$ and $y$ on the geodesic $t \mapsto \exp_p(t\xi)$, we have
	\begin{align*}
		d(z,y) = d(z,x) + d(x,y) = d(z,x) + d(y,B_1).
	\end{align*}
	Moreover, $z \in \Omega \setminus \Omega_\vareps$ implies that 
	\begin{align*}
		d(y,B_1) < d(z,y) - \vareps.
	\end{align*}
	Hence $\vareps < d(z,x) = R - s$, and therefore $s < R - \vareps$.
Thus $z \in B_{ R-\vareps}(0)^\inter$.
\end{proof}	

\begin{proof}[Proof of Theorem \ref{uniqueRie}]
We choose $\epsilon = \min(\epsilon_0, R)$,
and observe that $T$ satisfies the 
inequality in Lemma \ref{ucp} by (\ref{T_cond}).
We recall the assumption that $S_j \subset \Omega$.
By Lemmas \ref{ucp} and \ref{peel}, 
$f_1$ is uniquely determined on the set $B_R(0) \bs B_{R - \vareps}(0)$.
By (\ref{R1}) the function $f_j$ is obtained from $f_1$ via the translation
$
\exp_{x_j} \circ \mathcal T_{x_j} \circ \exp_{0}^{-1}.
$
This translation maps $B_R(0) \bs B_{R - \vareps}(0)$
to $$A_j := B_R(x_j) \bs B_{R - \vareps}(x_j),$$
and therefore we can determine $f_j|_{A_j}$.  

We solve the wave equation (\ref{PDEu})
with $F$ replaced by $F_0(t,x) = \sum_{j=1}^J \delta(t-t_j) f_j|_{A_j}(x)$.
Then we can determine
$
\Lambda F - \Lambda F_0 = \Lambda F_1$,
where $F_1(t,x) = \sum_{j=1}^J \delta(t-t_j) \tilde f_j(x)$
and $\tilde f_j$ is the restriction of $f_j$ on $B_{R - \vareps}(x_j)$.
If $\epsilon = R$ then we have recovered $F$, 
otherwise we repeat the above construction starting from $\Lambda F_1$.
This iteration allows us to decrease the radius $R$ by $(1-\rho)t_2$ in each step (see Figure \ref{peeling_fig}), and therefore it will terminate in a finite number of steps. 
\end{proof}

\begin{figure}
\centering
  \includegraphics[width=10cm]{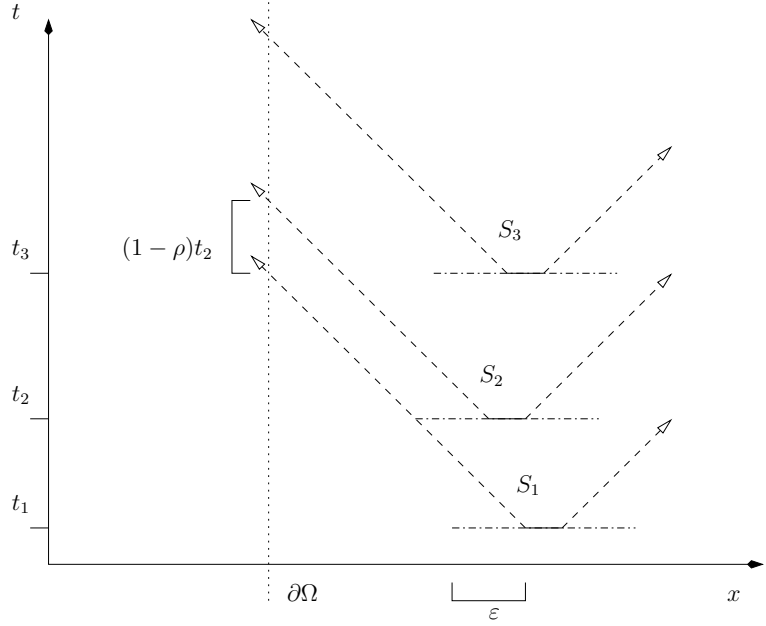}
\caption{At each step of the iteration, the radius where $f_1$ is unknown decreases by $(1-\rho)t_2$.}
\label{peeling_fig}
\end{figure}

\begin{proof}[Proof of Theorem \ref{uniqueEuc}]	
As before, $f_1$  is uniquely determined on the set  $B_R(0) \bs B_{R - \vareps}(0)$. We may assume without loss of generality that $\epsilon < R$.
Let us denote by $B^E_r(x)$ the Euclidean ball of radius $r$ centered at $x$.
As the geodesic ball $B_{R - \vareps}(0)$ is contained in the Euclidean ball $B^E_{c^+(R - \vareps)}(0)$,
we know $f_1$ outside $B^E_{c^+(R - \vareps)}(0)$.
The translation assumption (\ref{E1}) implies that 
$f_j$ is known outside $B^E_{c^+(R - \vareps)}(x_j)$.
This last ball is contained in the geodesic ball $B_{R^{(1)}}(x_j)$ where $R^{(1)} = \frac{c^+}{c^-}(R - \vareps)$.
As above we may remove the contribution of the known part of the functions $f_j$ from the data $\Lambda F$ and iterate the construction.

We terminate the iteration if $R^{(n)} \le \vareps$. Otherwise we set 
$R^{(n+1)} = \frac{c^+}{c^-}(R^{(n)} - \vareps)$
and reduce to the case $S_j \subset B_{R^{(n+1)}}(x_j)$. 
We have
\begin{align*}
R^{(n)} - R^{(n+1)} &= 
\frac{c^+}{c^-} 
\left(\epsilon - \left(1 - \frac{c^-}{c^+}\right) R^{(n)} \right).
\end{align*}
The assumption (\ref{E2}) implies that 
$$
\left(1 - \frac{c^-}{c^+}\right) R < (1-\rho)t_2 = \epsilon.
$$
Thus the sequence $R^{(n)}$ is decreasing and
\begin{align*}
R^{(n)} - R^{(n+1)} &\ge
\frac{c^+}{c^-} 
\left(\epsilon - \left(1 - \frac{c^-}{c^+}\right) R \right).
\end{align*}
Thus each step of the iteration decreases the radius by an amount that is bounded from below by a strictly positive constant, and therefore the iteration terminates in a finite number of steps.
\end{proof}

\section{Determining the translations from the supports $S_j$}
\label{sec_center}

Let us begin by considering the Euclidean translation condition \eqref{E1}.
Suppose that we know the sets $S_j$, $j=1,2,\dots,J$.
We define the center of mass
$$
\tilde x_j = \frac{1}{|\Sigma_j|} \int_{\Sigma_j} x dx.
$$
where $|\Sigma_j|$ is the Euclidean $n-1$ dimensional volume of $\Sigma_j$, $dx$ is the Euclidean surface measure on $\Sigma_j$ and $\Sigma_j$ is defined by (\ref{def_Sigma}).
By \eqref{E1} the function $f_j$ is obtained from $f_1$ via the translation
$\mathcal T_j^E(x) = x + x_j - x_1$.
Also the centers of mass are mapped via this translation, whence
$\tilde x_j - \tilde x_1 = x_j - x_1$.
Thus we can determine the translations $\mathcal T_j^E$ given the supports $S_j$ for all $j=1,2,\dots,J$.
When applying Theorem \ref{uniqueEuc} to recover the source $F$, we may assume that $x_j = \tilde x_j$ since this amounts to replacing $f$ with the translation $\tilde f(x) = f(x+\tilde x)$
where $\tilde x$ is the center of mass of $\supp{f}$. 

We turn now to the Riemannian translation condition \eqref{R1}, and consider only the case $\dim(S_j) = n$.
By \eqref{R1} the function $f_j$ is obtained from $f_1$ via the translation
$\mathcal T_j^R(x) = \exp_{x_j} \circ \mathcal T_{x_j} \circ \mathcal T_{x_1}^{-1} \circ \exp_{x_1}^{-1}.
$
We will give next a condition that guarantees that 
the translations $\mathcal T_j^R$ can be determined by using centers of mass analgously to the Euclidean case.

Let $\kappa$ and $K$ be a lower bound for the injectivity radius and an upper bound for the sectional curvature of the Riemannian manifold $(\Omega, c^{-2} dx^2)$, respectively, and define 
$$
r_\Omega = \min \{\kappa, \frac \pi {2 \sqrt{K}}\}.
$$
Suppose that $S \subset \R^n$ is measurable set that is contained in a geodesic ball
$B(p,r) \subset \Omega$ where $p \in \Omega$ and $r < r_\Omega$. Then the function $\varrho_S(x) = \max_{y \in S} d(x,y)$
has a unique minimizer $x_S$ (see \cite{afsari} Theorem 2.1).

Let us write $S = \supp{f}$ and denote by $|\xi|_g$ the norm of $\xi \in T_0\Omega$ with respect to the Riemannian metric $g = c^{-2} dx^2$.
We suppose that there is $R \in (0, r_\Omega)$ such that
\begin{align}\tag{R2}\label{R2}
\text{(i) }&\text{$|\xi|_g \le R$ for all $\xi \in S$,
and}
\\\notag
\text{(ii) }&\text{there is $\xi_0 \in S_0 \Omega$
such that $R\xi_0 \in S$ and $-R\xi_0 \in S$.}
\end{align}
The condition (R2) implies that there are two points on the boundary of $S$ that are symmetric with respect to the origin.

\begin{lemma}
Suppose that \eqref{R1} and \eqref{R2} hold.
Then the minimizer $x_{S_j}$ is $x_j$.
\end{lemma}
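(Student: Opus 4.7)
The plan is to show that the function $\varrho_{S_j}$ attains its global minimum at $x_j$ with value exactly $R$, and then invoke the uniqueness statement from \cite{afsari} already quoted just above. I would split this into an upper bound $\varrho_{S_j}(x_j) \le R$ coming from (R2)(i), and a matching global lower bound $\varrho_{S_j} \ge R$ on all of $\Omega$ coming from (R2)(ii).

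For the upper bound, \eqref{R1} gives $S_j = \exp_{x_j}(\mathcal T_{x_j}(S))$ with $S = \supp{f}$. Parallel transport is a linear isometry of tangent spaces equipped with $g = c^{-2}dx^2$, so (R2)(i) immediately yields $|\mathcal T_{x_j}\xi|_g \le R$ for every $\xi \in S$. Since $R < r_\Omega \le \kappa$, the exponential map at $x_j$ is a radial isometry on the ball of radius $R$, so $d(x_j, z) \le R$ for every $z \in S_j$. This not only gives $\varrho_{S_j}(x_j) \le R$ but also, picking any $r \in (R, r_\Omega)$, verifies the containment $S_j \subset B(x_j, r)$ with $r < r_\Omega$ that we need for the \cite{afsari} uniqueness statement to apply to $S_j$.

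For the lower bound, I would push the two antipodal vectors $\pm R\xi_0 \in S$ from (R2)(ii) forward to $p_j^\pm := \exp_{x_j}(\pm R \mathcal T_{x_j}\xi_0) \in S_j$. Since $R < r_\Omega$, the two radial segments from $x_j$ to $p_j^\pm$ are each length-minimizing and concatenate into a single minimizing geodesic joining $p_j^-$ to $p_j^+$, so $d(p_j^-,p_j^+) = 2R$. The triangle inequality at an arbitrary $y \in \Omega$ then yields
\begin{align*}
2R = d(p_j^-, p_j^+) \le d(y,p_j^-) + d(y,p_j^+) \le 2\, \varrho_{S_j}(y),
\end{align*}
so $\varrho_{S_j}(y) \ge R$ on all of $\Omega$.

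Combining the two bounds, $x_j$ attains the global minimum value $R$, so it is a minimizer of $\varrho_{S_j}$, and uniqueness then forces $x_{S_j} = x_j$. I do not see any real obstacle in this argument; the only point to be careful about is that the hypotheses of the cited uniqueness result are met globally (not merely in some a priori unspecified neighborhood), which is exactly what the containment $S_j \subset B(x_j,r)$ obtained in the upper-bound step ensures.
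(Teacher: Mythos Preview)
Your argument is correct and follows essentially the same route as the paper: the paper also pushes forward the antipodal points $\pm R\xi_0$ to $x^\pm = \exp_{x_j}(\pm R\,\mathcal T_{x_j}\xi_0)$, uses the triangle inequality to get $\varrho_{S_j}\ge R$ everywhere, observes $S_j\subset B_R(x_j)$ for the upper bound, and then relies on the uniqueness already quoted from \cite{afsari}. Your version is slightly more explicit about verifying the hypotheses of the Afsari result, but the structure and key ideas are identical.
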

\begin{proof}
For any $x \in \Omega$, the parallel translation $\mathcal T_{x}$ is a linear isometry,
and if $\xi \in T_x \Omega$ satisfies $|\xi|_g \le \kappa$ and $\exp_{x}(\xi) \in \Omega$ then $d(\exp_{x}(\xi), x) = |\xi|_g$.
Let $j = 1, \dots, J$ and define 
$x^\pm = \exp_{x_j}(\pm R \mathcal T_{x_j} \xi_0)$.
Then for all $x \in \Omega$
$$
d(x^+, x) + d(x^-,x) \ge d(x^+, x^-) = 2 R,
$$
and $\varrho_{S_j}(x) \ge R$. On the other hand, $S_j \subset B(x_j, R)$.
Hence $x_j$ is a minimizer of $\varrho_{S_j}$.
\end{proof}

\section{Examples}\label{examples}

If $\dim(S_j) = n$ then for a short time after $t_j$, the corresponding outward wavefront 
does not intersect $T^* S_j$, and the inward wavefront is contained in $T^* S_j$.
We will assume in this section that $\dim(S_j) = n$, $j=1,\dots,J$.
The condition (\ref{ML1}) can be seen as consisting of two requirements: first, that no outward propagating wavefront intersects any later wavefront, and second, that no inward propagating wavefront intersects any later wavefront.
We show below that the first part of  (\ref{ML1}) is implied by \eqref{SS} under some further conditions.

\begin{example}If $S_j = B_r(x_j)$, $j=1,2,\dots,J$, for some $r > 0$, then \eqref{SS} implies the first part of \eqref{ML1}.\end{example}

\begin{proof}
	To see this, note that the outgoing wavefront of $B_r(x_j)$ at time $t$ is $\partial B_{r + t - t_j}(x_j)$. Choose any $k > j$ and $x \in \overline{B_r(x_k)}$, by \eqref{SS}, there is some $y \in B_r(x_j)$ so that $d(x,y) < \rho\abs{t_j - t_k}$, and further, $d(y,x_j) < r$ so that $d(y,x_j) < r + \rho(t_k - t_j)$ showing 
$$x \in B_{r + \rho(t_k - t_j)}(x_j) \subset B_{r + t_k - t_j}(x_j)$$ so that the wavefront has already completely passed $S_k$ at $t = t_k$.
\end{proof}

\begin{example}Suppose that the Riemannian manifold $(\Omega, c^{-2} dx^2)$
is simple. If $S_j$ are arbitrary, and \eqref{TS} is satisfied, then the first part of \eqref{ML1} is satisfied.\end{example}

\begin{proof}
	To demonstrate this claim, suppose that an outgoing ray from $x\in\partial S_j$ intersects $S_k$ at some point $y$ at time $t$. If $t < t_k$ then there is nothing to verify, so assume $t \geq t_k$ (if we can show intersections do not happen on the base manifold, then they do not happen in the cotangent bundle either). Then on one hand, $d(y,x) = t - t_j \geq t_k - t_j$, and on the other
$$d(y,x) \leq d(y,x_k) + d(x_k,x_j) + d(x_j,x) \leq d(x_k,x_j) + 2R \leq \rho(t_k - t_j) + 2R.$$ 
Then, by \eqref{TS}, 
$$2R < (1 - \rho)(t_{j+1} - t_j) \leq (1 - \rho)(t_k -t_j) $$
so that finally, 
$$d(y,x) \leq \rho(t_k - t_j) + 2R < t_k - t_j$$
which is a contradiction.
\end{proof}

\begin{figure}
\centering
  \includegraphics[width=5.5cm]{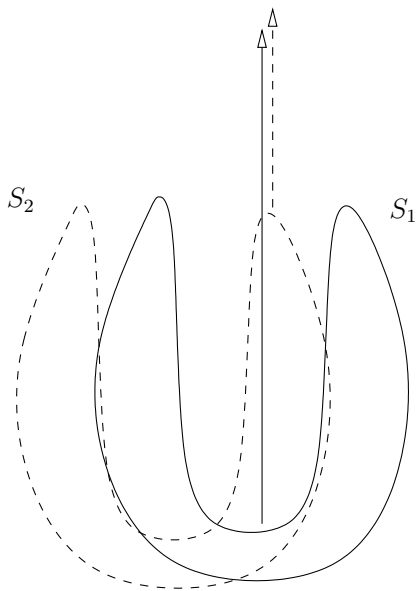}
\caption{A non-convex set.}
\label{horseshoe}
\end{figure}

Further, \eqref{TS} implies \eqref{SS}, so that if the second part of \eqref{ML1}, \eqref{ML2} and \eqref{TS} are assumed, then all the hypotheses for Theorems \ref{th_microlocal} and \ref{uniqueVS} are satisfied, yielding a complete reconstruction.

For the next example, let $r_c$ be the maximum $r$ such that $B_r(x)$ is convex for every $x \in \Omega$. This is known as the convexity radius of $\Omega$, and it is positive for any compact manifold (see \cite{berger}, Proposition $95$).

\begin{example}
\label{example_convex} 
If $S_j$ are convex, and $t_J - t_1 < r_c$, then \eqref{SS} implies the first part of \eqref{ML1}.\end{example}

To see that convexity is essential in Example \ref{example_convex}, consider Figure \ref{horseshoe}. Here, for a non-convex ``horseshoe'' shaped $S_1$, a ray leaving the ``bend'' of the shoe intersects the ``prong'' at a time later than $t_2$.
The proof of Example \ref{example_convex} is based on the following lemma.
\begin{lemma}\label{shortlemmaglobal}
	Let $C$ be a convex set in a Riemannian manifold $(M,g)$, let $A = \partial C$, let $y \in M \bs C$, and let $\sigma$ be a geodesic from $y$ to some point in $x \in A$ such that $\sigma$ is normal to $A$ at $x$ and such that $d(x,y) < r_c$. Then $\sigma$ minimizes the distance from $y$ to $C$.
\end{lemma}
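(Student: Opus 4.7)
The plan is a first-variation argument combined with convexity of the squared distance function inside a ball of radius $r_c$. Fix an arbitrary $z \in C$; I want to show $d(y,z) \ge d(y,x)$. If $d(y,z) \ge r_c$ this is immediate from the hypothesis $d(y,x) < r_c$, so I may assume $z \in B_{r_c}(y)$. Since $B_{r_c}(y)$ is convex (by definition of the convexity radius), the unique minimizing geodesic $\gamma:[0,L] \to M$ from $x$ to $z$ lies in $B_{r_c}(y)$, and by convexity of $C$ this same geodesic also lies in $C$.

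Next, I would set $h(t) = \tfrac{1}{2} d(y,\gamma(t))^2$. Because $\gamma$ stays inside $B_{r_c}(y)$, the squared distance from $y$ is smooth along $\gamma$, and $\nabla h|_{\gamma(t)} = -\exp_{\gamma(t)}^{-1}(y)$. The hypothesis that $\sigma$ meets $A$ normally at $x$ means that $\exp_x^{-1}(y)$ is a nonzero vector normal to $T_x A$ and pointing away from $C$; writing $\nu$ for this outward unit conormal, we have $\exp_x^{-1}(y) = d(x,y)\,\nu$. Since $\gamma$ leaves $x$ into $C$, its initial velocity satisfies $\langle \dot\gamma(0), \nu\rangle \le 0$, so
$$
h'(0) = -\langle \exp_x^{-1}(y),\, \dot\gamma(0)\rangle = -d(x,y)\langle \nu,\, \dot\gamma(0)\rangle \ge 0.
$$

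Finally, I would invoke the standard fact that within a convex ball the squared distance function $\tfrac{1}{2} d(y,\cdot)^2$ is convex along geodesics; this is a defining feature of the convexity radius (cf.\ \cite{berger}). Hence $h$ is convex on $[0,L]$, and together with $h'(0)\ge 0$ this forces $h$ to be non-decreasing, giving $h(L) \ge h(0)$, i.e.\ $d(y,z)\ge d(y,x)$. Since $z \in C$ was arbitrary, $d(y,C) = d(y,x)$, which is the desired conclusion.

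The main obstacles I expect are two geometric facts that should be cited rather than reproved: (i) that the minimizer from $x$ to $z$ inside $B_{r_c}(y)$ coincides with the geodesic produced in $C$ via convexity, so that one can simultaneously exploit convexity of the ball and of $C$; and (ii) convexity of $\tfrac12 d(y,\cdot)^2$ along geodesics in $B_{r_c}(y)$. One should also justify the inequality $\langle \dot\gamma(0),\nu\rangle\le 0$ by noting that at a smooth boundary point $x$ of the convex set $C$ the tangent cone equals the closed half-space $\{v:\langle v,\nu\rangle\le 0\}$, which follows from $\gamma\subset C$ and $\gamma(0)\in A$.
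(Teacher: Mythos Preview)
Your argument is correct and takes a different route from the paper's. The paper argues via a separating hypersurface: it introduces the totally geodesic hypersurface $\mathcal S$ tangent to $A$ at $x$, notes that both $C$ and the convex ball $B_{d(y,x)}(y)$ are tangent to $\mathcal S$ there, and shows they lie on opposite sides of $\mathcal S$, so no $z\in A$ can satisfy $d(y,z)<d(y,x)$. Your approach is analytic: you join $x$ to an arbitrary $z\in C\cap B_{r_c}(y)$ by a geodesic $\gamma$, compute $h'(0)\ge 0$ from the first-variation formula, and invoke geodesic convexity of $\tfrac12 d(y,\cdot)^2$ on the convex ball to force $h$ to be non-decreasing. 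Your version avoids the paper's appeal to a totally geodesic tangent hypersurface (which need not exist in a general Riemannian manifold), at the price of citing convexity of the squared distance inside $B_{r_c}(y)$ --- a standard fact, though not literally the paper's definition of $r_c$.

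One small point, shared with the paper's proof: the claim that $\exp_x^{-1}(y)$ points \emph{away} from $C$ does not follow from $\sigma$ being normal to $A$ at $x$; one also needs that $\sigma$ reaches $x$ from the exterior side. (In $\R^2$ with $C$ the unit disk, $y=(2,0)$, $x=(-1,0)$, the segment from $y$ to $x$ is normal to $\partial C$ at $x$ and $d(x,y)<r_c=\infty$, yet it does not realize $d(y,C)$.) In the paper's application $\sigma$ is an outgoing normal ray, so the direction is fixed; you should flag this as an additional hypothesis rather than deriving it from normality alone.
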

\begin{proof}
	For contradiction, assume there is some point $z \in A$ so that $d(y,z) < d(y,x)$.
	
	Consider the totally geodesic hyperplane $\S$ tangent to $A$ at $x$. Because $C$ is strictly convex, it lies entirely on one side of $\S$; call this side $H_1$, and the other $H_2$ and note that both are convex. Let $B = B_{d(y,x)}(y)$; as a radial geodesic, $\sigma$ is normal to $\partial B$ at $x$, and thus $\S$ is tangent to $B$ as well. Because $d(y,x) < r_c$,  $B$ is convex and must also lie entirely on one side of $\S$. 
	
	For some $s_1 > d(y,x)$, $\sigma(s_1) \in H_1$, and for some $s_2 < d(y,x)$, $\sigma(s_2) \in H_2$. Thus we must have $y \in H_2$, otherwise $\sigma$ is a geodesic that exits and then re-enters $H_1$, violating convexity. Thus $B$ and $C$ lie on opposite sides of $\S$.
	
	Therefore, since $B_{d(y,z)}(y) \subset B$, $z$ cannot be in $A$, and we have a contradiction.
\end{proof}

\begin{proof}[Proof of Example \ref{example_convex}]
	Now, suppose that an outgoing ray from $x\in\partial S_j$ intersects $S_k$ at some point $y$ at time $t$ (as before, it is sufficient to show intersections do not happen in the base manifold). If $t < t_k$ then there is nothing to verify, so assume $t \geq t_k$. Then $d(y,x) = t - t_j < r_c$ so $d(y,x) = d(y,S_j)$ by Lemma \ref{shortlemmaglobal}. On the other hand, $t - t_j \geq t_k - t_j > \rho(t_k - t_j)$ which violates \eqref{SS}.    
\end{proof}

\section*{Acknowledgments}

The authors express their gratitude to the Institut Henri Poincar\'e and the organizers of the program on Inverse problems in 2015 for providing an excellent research environment while part of this work was in progress. LO was partially supported by the EPSRC grant EP/L026473/1 and Fondation Sciences Math\'ematiques de Paris. JT was supported in part by the members of the Geo-Mathematical Imaging Group at Purdue University.

\bibliographystyle{cj}

\bibliography{tittelfitz}

\end{document}